\newtheorem{teo}{Theorem}[section]
\newtheorem{lema}{Lemma}[section]
\newtheorem{cor}{Corollary}[section]
\newtheorem{propo}{Proposition}[section]
\theoremstyle{definition}
\newtheorem{defi}{Definition}[section]
\newtheorem{rek}{Remark}[section]
\newcommand{\supp}{\operatorname{supp}}
\newcommand{\inff}{\operatorname{inf}}
\newcommand{\modd}{\operatorname{mod}}
\newcommand{\essinf}{\operatorname{ess\,inf}}
\newcommand{\esssup}{\operatorname{ess\,sup}}
\newcommand{\card}{\operatorname{card}}
\newcommand{\ve}{\varepsilon}
\newcommand{\N}{\mathbb{N}}
\newcommand{\R}{\mathbb{R}}
\newcommand{\Z}{\mathbb{Z}}
\newcommand{\M}{\mathcal{M}}
\newcommand{\f}{\varphi}
\newenvironment{proof1}[1][\textit{\textbf{Proof \em(Theorem~\ref{teocentral0})}}]{\textit{#1.} }{\hfill $\Box$}
\newenvironment{proof2}[1][\textit{\textbf{Proof \em(Theorem~\ref{teocentral1})}}]{\textit{#1.} }{\hfill $\Box$}
\begin{document}
\bigskip

\title {Generalized fractal dimensions of invariant measures of full-shift systems over compact and perfect spaces: generic behavior}
\date{}
\author{Silas L. Carvalho$^{*}$  ~~and~ Alexander Condori$^{\dag}$}
\maketitle

{ \small \noindent $^{*}\,$Departamento de Matemática, Universidade Federal de Minas Gerais, Av. Ant\^onio Carlos, 6627, Belo Horizonte, Minas Gerais, PO Box 702, ZIP 31270-901, Brazil. \\ {\em e-mail:
silas@mat.ufmg.br 

\em
{\small \noindent $^{\dag\,}$Instituto de Matem\'atica y Ciencias Afines, Universidad Nacional de Ingeniería, Calle Los Bi\'ologos 245,
Lima 15012, Per\'u. \\ {\em e-mail:
acondori@imca.edu.pe 
\em
\normalsize
\maketitle
\begin{abstract}
\noindent{
In this paper we show that, for topological dynamical systems with a dense set (in the weak topology) of periodic measures, a typical  (in Baire's sense) invariant measure has, for each $q>0$, zero lower $q$-generalized fractal dimension. This implies, in particular, that a typical invariant measure has zero upper Hausdorff dimension and zero lower rate of recurrence. Of special interest is the full-shift system $(X,T)$ (where $X= M^{\Z}$ is  endowed with a sub-exponential metric and the alphabet $M$ is a compact and perfect metric space), for which we show that a typical invariant measure has, for each $q>1$, infinite upper $q$-correlation dimension. Under the same conditions, we show that a typical invariant measure has, for each $s\in(0,1)$ and each $q>1$, zero lower $s$-generalized and infinite upper $q$-generalized dimensions. 
}
\noindent
\end{abstract}
{Key words and phrases}.  {\em Full-shift over an uncountable alphabet, invariant measures, generalized fractal dimensions, correlation dimension.}

\noindent {MSC 2010: 37A05, 28D05, 37A50}

\section{Introduction}

Let $(M,d)$ be a compact 
metric space, and let $S$ be its $\sigma$-algebra of Borel sets. Now, define $(X,\mathcal{B})$ as the bilateral product of a countable number of copies of $(M,S)$, and endow $X$ with the product topology (naturally, $X$ is metrizable and $\mathcal{B}$ is the $\sigma$-algebra of the Borel subsets of $X$). 

One defines the full-shift operator $T:X\rightarrow X$ by the action
\[Tx=y=(\ldots,y_{-1},y_0,y_1,\ldots),\]
where, for each $i\in\Z$, $y_i:=x_{i-1}$. $T$ is clearly a one-to-one and measurable map, with its inverse map also measurable. We consider in this work two different settings: $X$ is endowed  with any metric compatible with the product topology, or it is endowed with a sub-exponential metric of the form 
\begin{equation}\label{metric}
  r(x,y) =   \sum_{|n|\geq 0} \min \left\{\frac{1}{a_{|n|}+1}, d(x_n,y_n)\right\},
  \end{equation}  
where $x=(\ldots,x_{-n},\ldots,x_n,\ldots),y=(\ldots,y_{-n},\ldots,y_n,\ldots)$, with $(a_n)$ any monotone increasing sequence such that $\sum_{k\ge 0}\frac{1}{a_{k}+1}<\infty$ and, for each $\alpha>0$, $\lim_{k\to\infty}\frac{a_k}{e^{\alpha k}}=0$ (for instance, let for each $n\in\N\cup\{0\}$, $a_n=n^2$); naturally, these metrics induce topologies in $X$ which also are compatible with the product topology.   

Let $\mathcal{M}$ be the space of all Borel probability measures defined on $(X,\mathcal{B})$, endowed with the weak topology (that is, the coarsest topology for which the net $\{\mu_\alpha\}$ converges to $\mu$ if, and only if, $\int f\,d\mu_\alpha\rightarrow \int f\,d\mu$ for each bounded and continuous function $f$). Since $X$ is compact, $\mathcal{M}$ is also compact and metrizable (see Theorem 6.4, Chapter 2 in \cite{Parthasarathy2005}). Let $r_1$ be a possible compatible metric, and let $\mathcal{M}(T)$ be the (metric) subspace of all $T$-invariant probability measures. Since $\mathcal{M}(T)$ is closed (see Theorem~6.10 in~\cite{Walters}), it follows that it is also compact.

In~\cite{AS}, the present authors have studied some dimensional properties of $T$-invariant measures, such as their typical (in Baire's sense) Hausdorff and packing dimensions (see~\cite{Cutler1995} for the definitions of Hausdorff and packing measures, and also~\cite{AS} for the main motivations and results in dynamical systems theory). There, it was required that both $T$ and $T^{-1}$ were Lipschitz transformations, and for this reason, $(X,\mathcal{B})$ was endowed with a proper metric (namely, $r(x,y) =  \sum_{|n|\geq 0} \frac{1}{2^{|n|}}\frac{d(x_n,y_n)}{1+d(x_n,y_n)}$).

In the present work, we are interested in extending such analysis to the so-called upper and lower $q$-generalized fractal dimensions of these measures, $D^\pm_q(\mu)$, with $q>0$ (Definition~\ref{gfd}); for a discussion of the role played by these dimensions in the study of dynamical systems and chaos phenomena, see~\cite{Barbaroux,PesinT1995,Pesin1993,Pesin1997} and the references therein. We shall also explore the connection between such properties and the orbital behavior of the full-shift system through the so-called upper and lower $q$-correlation dimensions at a point $x\in X$, for $q\in\N\setminus\{1\}$ (see~\eqref{cordim}).

Some preparation is required in order to properly present our main results. 

\begin{defi}[packing and Hausdorff dimensions of a set]
Let $X$ be a general metric space, and let $\emptyset\neq E\subset X$. We define the packing and Hausdorff dimensions of $E$  to be the critical points
\begin{eqnarray*}
\dim_P(E)=\inf\{\alpha>0\mid P^{\alpha}(E)=0\}
\end{eqnarray*}
and 
\begin{eqnarray*}
\dim_H(E)=\inf\{\alpha>0\mid H^{\alpha}(E)=0\}
\end{eqnarray*}
respectively, where $P^{\alpha}$ ($H^\alpha$) stands for the $\alpha$-packing (Hausdorff) dimensional measure (see~\cite{AS,Cutler1995} for a definition). We note that $\dim_H(X)$ or $\dim_P(X)$ may be infinite for some space $X$.
\end{defi}

\begin{defi}[lower and upper Hausdorff and packing dimensions of a positive finite Borel measure; \cite{Mattila}]
Let $\mathcal{B}$ be the Borel $\sigma$-algebra of $X$, and let $\mu$ be a positive finite Borel measure. The lower and the upper $K$ dimensions of $\mu$ are defined, respectively, by 
\begin{eqnarray*}
\dim_{K}^-(\mu)&=&\inf\{\dim_{K}(E)\mid \mu( E)>0, ~ E\in \mathcal{B}\},\\
\dim_{K}^+(\mu)&=&\inf\{\dim_{K}(E)\mid \mu(X\setminus E)=0, ~ E\in \mathcal{B}\},
\end{eqnarray*}
where $K$ stands for $H$ (Hausdorff) or $P$ (packing).
\end{defi}

Let $\mu$ be a positive finite Borel measure defined on the general metric space $X$. One defines the upper and lower local dimensions of $\mu$ at $x\in X$ by
$$\overline{d}_{\mu}(x)=\limsup_{\ve\to 0}\frac{\log \mu(B(x,\ve))}{\log \ve} ~~\mbox{ and }~~ \underline{d}_{\mu}(x)=\liminf_{\ve\to 0}\frac{\log \mu(B(x,\ve))}{\log \ve},$$ 
if, for each $\ve> 0$, $\mu(B(x;\ve))>0$; if not, $\overline{\underline{d}}_\mu(x):=+\infty$.

It is possible to show  (see~\cite{AS}) that both lower and upper Hausdorff and packing dimensions of a probability measure $\mu$ on $X$  can be characterized by the essential supremum (infimum) of the lower and upper local dimensions, respectively.

\begin{propo} 
\label{BGT} 
Let $X$ be a metric space and $\mu$ a probability measure on $X$. Then,
\begin{eqnarray*}
\mu\textrm{-}\essinf \underline{d}_{\mu}(x)=\dim_H^-(\mu)\leq \mu\textrm{-}\esssup \underline{d}_{\mu}(x)= \dim_H^+(\mu), \\
 \mu\textrm{-}\essinf \overline{d}_{\mu}(x)=\dim_P^-(\mu)\leq \mu\textrm{-}\esssup \overline{d}_{\mu}(x)= \dim_P^+(\mu). 
\end{eqnarray*}
\end{propo}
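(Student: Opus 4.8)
The plan is to establish the four equalities and the two inequalities in Proposition~\ref{BGT} by relating the global dimensions of $\mu$ to the pointwise local dimensions, using the standard mass-distribution/covering arguments. I would split the work into the Hausdorff case and the packing case; since the packing case is entirely analogous (replacing $H^\alpha$ by $P^\alpha$, lower local dimensions by upper local dimensions, and covering arguments by packing arguments), I would prove the Hausdorff statement in detail and remark that the packing statement follows \emph{mutatis mutandis}. The two inequalities $\dim_H^-(\mu)\le\dim_H^+(\mu)$ and $\dim_P^-(\mu)\le\dim_P^+(\mu)$ are immediate once the equalities are in place, because $\mu\textrm{-}\essinf$ of any function is at most its $\mu\textrm{-}\esssup$.

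For the Hausdorff equalities, I would first recall the classical Billingsley-type lemma (see~\cite{Mattila}): if $E\in\mathcal{B}$ with $\mu(E)>0$ and $\underline{d}_\mu(x)\ge s$ for all $x\in E$, then $\dim_H(E)\ge s$; conversely, if $\underline{d}_\mu(x)\le s$ for all $x\in E$, then $\dim_H(E)\le s$. The first direction comes from a mass-distribution argument (a Frostman-type estimate: control $\mu$ of small balls from below to bound Hausdorff measure from below), and the second from an explicit economical covering of $E$ by balls on which $\mu$ is not too small. Granting this lemma, the inequality ``$\mu\textrm{-}\essinf\underline{d}_\mu(x)\le\dim_H^-(\mu)$'' follows by taking, for any admissible $E$ (i.e. $\mu(E)>0$), the subset $E'=\{x\in E:\underline{d}_\mu(x)\ge \mu\textrm{-}\essinf\underline{d}_\mu\}$, which still has $\mu(E')>0$ by definition of essential infimum, so $\dim_H(E)\ge\dim_H(E')\ge\mu\textrm{-}\essinf\underline{d}_\mu(x)$; taking the infimum over $E$ gives the bound. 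For the reverse inequality ``$\dim_H^-(\mu)\le\mu\textrm{-}\essinf\underline{d}_\mu(x)$'', I would fix any $t>\mu\textrm{-}\essinf\underline{d}_\mu(x)$, so that $E=\{x:\underline{d}_\mu(x)<t\}$ has positive measure, and then the Billingsley lemma gives $\dim_H(E)\le t$; since $E$ is admissible, $\dim_H^-(\mu)\le t$, and letting $t\downarrow$ yields the claim.

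For the upper equalities, ``$\mu\textrm{-}\esssup\underline{d}_\mu(x)=\dim_H^+(\mu)$'', the argument is dual: to see $\dim_H^+(\mu)\le\mu\textrm{-}\esssup\underline{d}_\mu(x)$, let $s=\mu\textrm{-}\esssup\underline{d}_\mu(x)$ and set $E=\{x:\underline{d}_\mu(x)\le s\}$; then $\mu(X\setminus E)=0$, so $E$ is admissible in the definition of $\dim_H^+$, and the covering direction of the Billingsley lemma gives $\dim_H(E)\le s$. For the opposite inequality, take any $E$ with $\mu(X\setminus E)=0$ and any $t<\mu\textrm{-}\esssup\underline{d}_\mu(x)$; the set $\{x:\underline{d}_\mu(x)\ge t\}$ has positive measure, hence meets $E$ in a positive-measure set $E''$, and the mass-distribution direction gives $\dim_H(E)\ge\dim_H(E'')\ge t$, whence $\dim_H^+(\mu)\ge t$. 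Passing $t\uparrow$ completes the proof, and the packing case is handled the same way with upper local dimensions and the corresponding packing-measure version of the Billingsley lemma.

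I expect the main obstacle to be a clean statement and proof of the Billingsley-type lemma in the generality of an arbitrary metric space $X$ (rather than $\R^n$), since the usual proofs of the upper bound on Hausdorff dimension via economical coverings use a Vitali-type covering argument that needs some care without a Besicovitch covering theorem; one typically circumvents this by working directly with the definition of $H^\alpha$ and choosing, for each $x$ in the relevant set and each scale, a ball of radius realizing $\mu(B(x,\varepsilon))\ge\varepsilon^{s+\eta}$, then extracting a disjointed or bounded-overlap subfamily to bound $\sum(\mathrm{diam})^\alpha$. Since Proposition~\ref{BGT} is attributed to the prior work~\cite{AS} and to~\cite{Mattila}, I would most likely just cite those references for the lemma and present only the short deduction of the six relations above from it, flagging that all measures here are probability (hence finite) measures on a compact — and in particular separable — metric space, which is what makes the covering arguments go through.
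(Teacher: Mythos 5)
Your argument is correct, and it is worth noting at the outset that the paper itself offers no proof of Proposition~\ref{BGT}: it simply asserts the characterization and points to~\cite{AS} (and, for the definitions, to~\cite{Mattila}), so there is no in-paper argument to compare against line by line. What you propose is exactly the standard route that those references take: the four equalities are reduced to the two-sided Billingsley-type lemma (mass-distribution principle for the lower bounds on $\dim_H$ and $\dim_P$, economical coverings/packings for the upper bounds), and the level sets $\{x\mid \underline{d}_\mu(x)<t\}$, $\{x\mid \underline{d}_\mu(x)\ge t\}$, etc., are used as the admissible sets in the definitions of $\dim_K^\pm(\mu)$. Your deductions from the lemma are all sound, including the observation that the two middle inequalities are trivial once the equalities hold. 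Two small points you should make explicit if you write this up in full: (i) the level sets you form must be Borel, so you need that $x\mapsto\underline{d}_\mu(x)$ and $x\mapsto\overline{d}_\mu(x)$ are Borel measurable (standard, since $x\mapsto\mu(B(x,\ve))$ is Borel and the $\liminf$/$\limsup$ over $\ve\to0$ can be taken along a countable sequence of radii); and (ii) the paper's convention sets the local dimensions to $+\infty$ off $\supp(\mu)$, which is harmless only because $\mu(X\setminus\supp(\mu))=0$ on a separable space --- you correctly flag that separability (compactness here) is also what rescues the $5r$-covering argument in the absence of a Besicovitch theorem. With those caveats recorded, your proof is a legitimate self-contained substitute for the citation the authors rely on.
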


The so-called correlation dimension of a probability measure was introduced by Grassberger, Procaccia and Hentschel \cite{Grassberger}  in an attempt to produce a characteristic of a dynamical system that captures information about the global behavior of typical (with respect to an invariant measure) trajectories by observing only one them. 

This dimension plays an important role in the numerical investigation of different dynamical systems, including those which present strange attractors. The formal definition  is as follows (see~\cite{PesinT1995,Pesin1993,Pesin1997}): let $(X,r)$ be a complete and separable (Polish) metric space, and let $T:X\rightarrow X$ be a continuous mapping. Given $x\in X$, $\ve>0$  and $n\in\N$, one defines the correlation sum of order $q\in\N\setminus\{1\}$ (specified by the points $\{T^i(x)\}$, $i=1,\ldots,n$) by
\begin{equation*}
C_q(x,n,\ve)=\frac{1}{n^q}\,\card\,\{(i_1\cdots i_q)\in \{0,1,\cdots, n\}^q\mid r(T^{i_j}(x),T^{i_l}(x))\leq \ve ~ \mbox{ for any } ~ 0\leq j,l\leq q\},
\end{equation*}
where $\card A$ is the cardinality of the set $A$. Given $x\in X$, one defines (when the limit $n\to\infty$ exists) the quantities 
\begin{equation}\label{cordim}
 \underline{\alpha}_q(x)=  \frac{1}{q-1}\lim_{\overline{ \ve\to 0}}   \lim_{n\to \infty}\frac{\log C_q(x,n,\ve)}{\log(\ve)}, ~~  ~~  \overline{\alpha}_q(x)= \frac{1}{q-1}\overline{\lim_{\ve\to 0}} \lim_{n\to \infty}\frac{\log C_q(x,n,\ve)}{\log(\ve)},
\end{equation}
the so-called \emph{lower} and \emph{upper correlation dimensions of order $q$ at the  point $x$} or the \emph{lower} and the \emph{upper $q$-correlation dimensions at $x$}. If the limit $\ve\to 0$ exists, we denote it by $\alpha_q$, the so-called \emph{$q$-correlation dimension at $x$}. In this case, if $n$ is large and $\ve$ is small, one has the asymptotic relation
$$C_q(x,n,\ve) \thicksim\ve^{(q-1)\alpha_q}.$$
 
$C_q(x,n,\ve)$ gives an account of how the orbit of $x$, truncated at time $n$, ``folds'' into an $\ve$-neighborhood of itself; the larger $C_q(x,n,\ve)$, the more ``tight'' this truncated orbit is. $\underline{\alpha}_q(x)$ and $ \overline{\alpha}_q(x)$ are, respectively, the lower and upper growing rates of $C_q(x,n,\ve)$ as $n\to\infty$ and $\ve\to 0$ (in this order).

\begin{defi}[Energy function] 
Let $X$ be a general metric space and let $\mu$ be a Borel probability measure on $X$. For $q\in \R\setminus\{1\}$ and  $\ve\in(0,1)$, one defines the so-called energy function $I_{\cdot}(q,\ve):\M\rightarrow(0,+\infty]$ by the law
\begin{eqnarray}
\label{ef}
I_{\mu}(q,\ve)=\int_{\supp(\mu)}\mu(B(x,\ve))^{q-1}d\mu(x),
\end{eqnarray}
where $\supp(\mu)$ is the topological support of $\mu$.
\end{defi}

The next result shows that the two previous definitions are intimately related.

\begin{teo}[Pesin \cite{Pesin1993,Pesin1997}]
\label{Pesincorrelation}
Let $X$ be a Polish metric space, assume that $\mu$ is ergodic and let $q\in\N\setminus\{1\}$. Then, there exists a set $Z\subset X$ of full $\mu$-measure such that, for each $R,\eta>0$ and each $x\in Z$, there exists an $N=N(x,\eta,R)\in\N$ such that 
$$|C_q(x,n,\varepsilon)-I_{\mu}(q,\varepsilon)|\leq \eta$$
holds for each $n\geq N$ and each $0<\varepsilon\leq R$. In other words, $C_q(x,n,\varepsilon)$ tends to $I_{\mu}(q,\varepsilon)$ when $n\to \infty$ for $\mu$-almost every $x\in X$, uniformly over $\varepsilon\in(0,R]$.
\end{teo}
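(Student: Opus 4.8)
The statement to be proved is Theorem \ref{Pesincorrelation}, which asserts the uniform (in $\varepsilon$) convergence of the correlation sums $C_q(x,n,\varepsilon)$ to the energy integral $I_\mu(q,\varepsilon)$ for $\mu$-a.e.\ $x$, when $\mu$ is ergodic and $q\in\N\setminus\{1\}$.

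The plan is to realize $C_q(x,n,\varepsilon)$ as a Birkhoff-type ergodic average on the product system and then upgrade pointwise convergence to uniformity in $\varepsilon$ by a monotonicity/Dini argument. First I would observe that, up to a negligible correction coming from allowing repeated indices (the "diagonal" tuples, whose count is $O(n^{q-1})$ and hence contributes $O(1/n)$ to $C_q$), the correlation sum is
\[
C_q(x,n,\varepsilon)\;\approx\;\frac{1}{n^q}\sum_{i_1=0}^{n-1}\cdots\sum_{i_q=0}^{n-1}\mathbf 1_{A_\varepsilon}\bigl(T^{i_1}x,\dots,T^{i_q}x\bigr),
\]
where $A_\varepsilon=\{(z_1,\dots,z_q)\in X^q : r(z_j,z_l)\le\varepsilon \text{ for all } j,l\}$. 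This is exactly the $n^q$-term ergodic average of the indicator $\mathbf 1_{A_\varepsilon}$ along the orbit of $x$ under the $q$-fold product map $T\times\cdots\times T$ acting on $X^q$, evaluated on the "diagonal-driven" orbit $(x,x,\dots,x)$. The key fact is that $(X^q, T^{\times q}, \mu^{\otimes q})$ is not ergodic, but the relevant averages still converge: one can instead use the representation $C_q(x,n,\varepsilon)=\frac1{n}\sum_{i=0}^{n-1} g_\varepsilon(T^i x)$ is \emph{not} available directly, so instead I would fix $\varepsilon$ and appeal to a multiparameter/iterated Birkhoff argument — average first over $i_q$, using the ergodic theorem for $T$ to replace $\frac1n\sum_{i_q}\mathbf 1_{\{r(T^{i_q}x,\,\cdot\,)\le\varepsilon\}}$ by $\mu(B(\cdot,\varepsilon))$ uniformly enough, then iterate $q-1$ more times, each step contributing one more factor of $\mu(B(\cdot,\varepsilon))$ and one more integration $d\mu$, landing on $\int \mu(B(x,\varepsilon))^{q-1}\,d\mu(x)=I_\mu(q,\varepsilon)$. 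Ergodicity of $\mu$ is what makes each of these single-coordinate averages converge $\mu$-a.e.\ to the corresponding integral.

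The main obstacle is the \emph{uniformity in $\varepsilon\in(0,R]$}: the ergodic theorem gives, for each fixed $\varepsilon$, a full-measure set and a threshold $N(x,\varepsilon)$, but one needs a single $N(x,\eta,R)$ working for all $\varepsilon\le R$ simultaneously. To handle this I would exploit monotonicity: both $\varepsilon\mapsto C_q(x,n,\varepsilon)$ and $\varepsilon\mapsto I_\mu(q,\varepsilon)$ are nondecreasing in $\varepsilon$ (for $q>1$, since $\mu(B(x,\varepsilon))^{q-1}$ is nondecreasing and the membership condition in $A_\varepsilon$ is monotone). Fix a finite $\delta$-net $0<\varepsilon_1<\dots<\varepsilon_m=R$ of $(0,R]$ chosen so that $I_\mu(q,\varepsilon_{k+1})-I_\mu(q,\varepsilon_k)<\eta/3$ for all $k$ — possible because $\varepsilon\mapsto I_\mu(q,\varepsilon)$ is monotone and bounded, hence has at most countably many jumps, and one can place net points to straddle the large jumps and subdivide finely elsewhere; more carefully one uses right-continuity of $\varepsilon\mapsto\mu(B(x,\varepsilon))$ together with dominated convergence to get $I_\mu(q,\cdot)$ right-continuous, which controls the oscillation. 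Apply the pointwise result at each of the finitely many net values $\varepsilon_k$ to get a common $N(x,\eta,R)=\max_k N(x,\eta/3,\varepsilon_k)$; then for arbitrary $\varepsilon\in[\varepsilon_k,\varepsilon_{k+1}]$ sandwich
\[
C_q(x,n,\varepsilon_k)-\tfrac{\eta}{3}\;\le\;I_\mu(q,\varepsilon_k)\;\le\;I_\mu(q,\varepsilon)\;\le\;I_\mu(q,\varepsilon_{k+1})\;\le\;C_q(x,n,\varepsilon_{k+1})+\tfrac{\eta}{3}\;\le\;C_q(x,n,\varepsilon)+\tfrac{\eta}{3}+\tfrac{\eta}{3},
\]
and symmetrically the reverse chain, obtaining $|C_q(x,n,\varepsilon)-I_\mu(q,\varepsilon)|\le\eta$ for all $n\ge N$ and all $\varepsilon\le R$.

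Two technical points must be checked en route. First, the diagonal correction: replacing the index set $\{0,1,\dots,n\}^q$ by its "genuinely distinct" part changes $C_q$ by a term bounded by (number of $q$-tuples with a repeated coordinate)$/n^q = O(n^{q-1})/n^q=O(1/n)$, which is absorbed into $\eta$ for $n$ large; so it suffices to prove the statement for the off-diagonal average, which is the iterated Birkhoff average above. Second, one should be careful that the iterated-averaging argument indeed yields an a.e.\ statement: after averaging over the last coordinate one gets, for $\mu$-a.e.\ $x$, a function of the remaining $q-1$ coordinates that must itself be handled — here one packages everything as a single application of the Birkhoff theorem on $(X^q,T^{\times q},\mu^{\otimes q})$ restricted to the appropriate invariant component, or, more cleanly, invokes that for ergodic $\mu$ and bounded measurable $F$ on $X^q$ the averages $\frac1{n^q}\sum_{i_1,\dots,i_q}F(T^{i_1}x,\dots,T^{i_q}x)$ converge $\mu$-a.e.\ to $\int F\,d\mu^{\otimes q}$ (a standard consequence of the one-dimensional ergodic theorem applied coordinate by coordinate, using Fubini and the fact that the "bad" sets at each stage are null). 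Applying this with $F=\mathbf 1_{A_\varepsilon}$ gives $\int\mathbf 1_{A_\varepsilon}\,d\mu^{\otimes q}$; expanding $A_\varepsilon$ and integrating out shows this equals $\int\mu(B(x,\varepsilon))^{q-1}\,d\mu(x)=I_\mu(q,\varepsilon)$, completing the pointwise step before the monotonicity upgrade above.
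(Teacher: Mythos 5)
First, a point of comparison: the paper does not prove this statement at all --- it is quoted as a known result of Pesin, with the proof delegated to \cite{Pesin1993,Pesin1997}. So your proposal cannot be matched against an argument in the text; it has to stand on its own, and as written it has a genuine gap at its central step. The claim that for ergodic $\mu$ and \emph{bounded measurable} $F$ on $X^q$ the averages $\frac{1}{n^q}\sum_{i_1,\dots,i_q}F(T^{i_1}x,\dots,T^{i_q}x)$ converge $\mu$-a.e.\ to $\int F\,d\mu^{\otimes q}$ is false. Take $q=2$, $\mu$ non-atomic, and $F=\mathbf 1_G$ with $G=\bigcup_{k\in\Z}\{(u,v):v=T^ku\}$ the (Borel) orbit relation: then $\mu^{\otimes 2}(G)=0$ while the average is identically $1$. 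The reason your iterated-Birkhoff/Fubini justification does not repair this is exactly the quantifier problem you brush past: averaging over $i_q$ with $(z_1,\dots,z_{q-1})$ frozen produces an exceptional null set that depends on $(z_1,\dots,z_{q-1})$, and Fubini only excludes a $\mu^{\otimes(q-1)}$-null set of such tuples --- but you need the convergence at tuples lying on the orbit of $x$ itself, which form a $\mu^{\otimes(q-1)}$-null set. For the specific $F=\mathbf 1_{A_\ve}$ the statement can be rescued, but only by a different mechanism: sandwich $\mathbf 1_{A_\ve}$ between continuous functions, reduce continuous $F$ to finite sums of products $f_1\otimes\cdots\otimes f_q$ (Stone--Weierstrass), apply Birkhoff factorwise, and then pass to the indicator using $\mu^{\otimes q}(\partial A_\ve)=0$ --- which holds only for all but countably many $\ve$, and this caveat has to be absorbed somewhere. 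None of this is in your write-up, and it is the actual content of Pesin's proof.

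Two further points would also fail as stated. For $q\ge 3$, $\mu^{\otimes q}(A_\ve)\neq\int\mu(B(x,\ve))^{q-1}d\mu(x)$: the set $A_\ve$ imposes the condition $r(z_j,z_l)\le\ve$ on \emph{all} pairs, which is strictly stronger than requiring every $z_j$ to lie in $B(z_1,\ve)$, so ``expanding $A_\ve$ and integrating out'' does not produce $I_\mu(q,\ve)$; one only gets a two-sided comparison between the scales $\ve/2$ and $\ve$ (harmless for dimensions, but not for the exact identity the theorem asserts). And the monotone-net upgrade to uniformity in $\ve$ is incomplete at discontinuities of $\ve\mapsto I_\mu(q,\ve)$: a monotone bounded function can have jumps of size $\ge\eta/3$, in which case no finite net satisfies $I_\mu(q,\ve_{k+1})-I_\mu(q,\ve_k)<\eta/3$ across the jump, and pointwise convergence of monotone functions to a discontinuous monotone limit need not be uniform (compare empirical distribution functions converging to a law with an atom). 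Handling this requires proving convergence separately for the open-ball and closed-ball versions at each jump point --- note also the mismatch between the closed condition $r\le\ve$ in $C_q$ and the open balls in $I_\mu$. The skeleton (ergodic averages plus a monotonicity argument for uniformity) is the right one, but the two analytic cruxes --- evaluating a product-space ergodic average along the diagonal, and uniformity across possible jumps --- are precisely where the work lies, and both are missing.
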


\begin{defi}[Generalized fractal dimensions]\label{gfd}
 Let $X$ be a general metric space, let $\mu$ be a Borel probability measure on $X$, and let $q\in (0,\infty)\setminus\{1\}$. The so-called upper and lower $q$-generalized fractal dimensions of $\mu$ are defined, respectively, as
$$
 D^+_{\mu}(q)=\limsup_{\varepsilon\downarrow 0} \frac{\log I_{\mu}(q,\varepsilon)}{(q-1)\log \varepsilon}  ~~\mbox{ and  }~~ D^-_{\mu}(q)=\liminf_{\varepsilon\downarrow 0} \frac{\log I_{\mu}(q,\varepsilon)}{(q-1)\log \varepsilon}.
$$
 For $q=1$, one defines the so-called upper and lower entropy dimensions (see~\cite{Barbaroux} for a discussion about the connection between entropy dimensions and Rényi information dimensions), respectively, as
$$D^+_{\mu}(1)=\limsup_{\varepsilon\downarrow 0} \frac{\int_{\supp(\mu)} \log \mu(B(x,\ve))d\mu(x)}{\log \varepsilon},$$
$$ D^-_{\mu}(1)=\liminf_{\varepsilon\downarrow 0} \frac{\int_{\supp(\mu)} \log \mu(B(x,\ve))d\mu(x)}{\log \varepsilon}.$$
\end{defi}

Some useful relations involving the generalized fractal, Hausdorff and packing dimensions of a probability measure are given by the following inequalities, which combine Propositions~4.1 and~4.2 in~\cite{Barbaroux} with Proposition~\ref{BGT} (although Propositions~4.1 and~4.2 in~\cite{Barbaroux} were originally proved for probability measures defined on $\R$, one can extend them to probability measures defined on a general metric space $X$; see also \cite{Rudnicki}).

\begin{propo}
\label{BGT1} 
Let $\mu$ be a Borel probability measure over $X$, let $q>1$ and let $0<s<1$. Then,
\begin{eqnarray*}
D^-_{\mu}(q)\leq \dim_H^-(\mu)\leq  \dim_H^+(\mu)\leq D^-_{\mu}(s), \\
D^+_{\mu}(q)\leq \dim_P^-(\mu)\leq  \dim_P^+(\mu)\leq D^+_{\mu}(s). 
\end{eqnarray*}
Furthermore, if $\supp(\mu)$ is compact, then $D_{\mu}^{\pm}(q)\leq D_{\mu}^{\pm}(1)\leq D_{\mu}^{\pm}(s)$.
\end{propo}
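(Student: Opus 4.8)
The plan is to prove Proposition~\ref{BGT1} by combining two ingredients: the monotonicity (in $q$) of the map $q \mapsto (q-1)D^\pm_\mu(q)$ and the sandwich relations of Proposition~\ref{BGT}, essentially following Propositions~4.1 and~4.2 of~\cite{Barbaroux} but verifying that nothing in their argument is special to $X = \R$.

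\textbf{Step 1: the local-dimension interpretation.} First I would recall, for a measure $\mu$ with compact support, the elementary pointwise bounds relating $I_\mu(q,\ve)$ to the local dimensions. Fix $q > 1$. For $\mu$-a.e.\ $x$ and $\ve$ small, $\mu(B(x,\ve))^{q-1} \approx \ve^{(q-1)\overline d_\mu(x)}$ along a subsequence $\ve \downarrow 0$, and integrating, one gets by Fatou / reverse Fatou arguments that
\[
D^-_\mu(q) \;=\; \liminf_{\ve \downarrow 0} \frac{\log \int \mu(B(x,\ve))^{q-1}\,d\mu(x)}{(q-1)\log\ve}
\]
is controlled from above by the $\mu$-essential supremum of $\underline d_\mu$ — because the integrand is dominated, on a set of positive measure, by $\ve^{(q-1)(\esssup \underline d_\mu + \delta)}$, while $\log\ve < 0$ reverses the inequality. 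This yields $D^-_\mu(q) \le \dim_H^+(\mu)$ via Proposition~\ref{BGT}. The symmetric estimate for $q < 1$: now $q - 1 < 0$, so $\mu(B(x,\ve))^{q-1}$ is \emph{large} where $\mu(B(x,\ve))$ is small; restricting the integral to a positive-measure set where $\underline d_\mu(x) \ge \dim_H^+(\mu) - \delta$ gives a lower bound $\int \mu(B(x,\ve))^{q-1}\,d\mu \gtrsim \ve^{(q-1)(\dim_H^+(\mu)-\delta)}$ (note $(q-1)$ is negative, so this is a genuine lower bound on the integral), and dividing by $(q-1)\log\ve > 0$ gives $D^-_\mu(s) \ge \dim_H^+(\mu)$. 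The chain $D^-_\mu(q) \le \dim_H^-(\mu) \le \dim_H^+(\mu)$ is immediate from Proposition~\ref{BGT} plus $D^-_\mu(q) \le \dim_H^+(\mu)$ after noting the analogous one-sided bound $D^-_\mu(q)\le \essinf \underline d_\mu$ obtained by restricting attention to where the local dimension is small. The packing-dimension line is identical with $\limsup$ in place of $\liminf$ and $\overline d_\mu$ in place of $\underline d_\mu$.

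\textbf{Step 2: monotonicity in $q$ and the $q=1$ case.} For the last sentence, I would prove that $q \mapsto (q-1)D^\pm_\mu(q)$ is nondecreasing on $(0,\infty)$ when $\supp(\mu)$ is compact. The key inequality is Jensen / Hölder applied to the energy function: for $1 < q_1 < q_2$, writing $\mu(B(x,\ve))^{q_2 - 1} = \big(\mu(B(x,\ve))^{q_1-1}\big)^{(q_2-1)/(q_1-1)}$ and using that $\mu(B(x,\ve)) \le 1$ together with compactness of the support (which makes $I_\mu(q,\ve)$ finite and lets us apply Hölder's inequality on the probability space $(\supp\mu, \mu)$), one gets a comparison of the form $I_\mu(q_2,\ve)^{1/(q_2-1)} \le I_\mu(q_1,\ve)^{1/(q_1-1)}$ — i.e.\ $\log I_\mu(q,\ve)/(q-1)$ is nonincreasing in $q$; dividing by $\log\ve < 0$ flips this to show $D^\pm_\mu(q)$ is nonincreasing in $q$. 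Taking $q_1 \downarrow 1$ and $q_2 = q$, respectively $q_1 = s$ and $q_2 \uparrow 1$, and checking that $D^\pm_\mu(q) \to D^\pm_\mu(1)$ as $q \to 1$ (the integrand $\frac{\mu(B)^{q-1}-1}{q-1} \to \log\mu(B)$, with domination on compact support), yields $D^\pm_\mu(q) \le D^\pm_\mu(1) \le D^\pm_\mu(s)$.

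\textbf{Main obstacle.} The routine-looking parts — Fatou-type swaps between the integral and the $\liminf/\limsup$ in $\ve$ — are where the care is needed: the integrand $\mu(B(x,\ve))^{q-1}$ need not be monotone in $\ve$, the essential supremum of $\underline d_\mu$ can be infinite, and for $q<1$ one must rule out $I_\mu(s,\ve) = +\infty$, which is exactly where the compactness hypothesis on $\supp(\mu)$ (and hence finiteness of the energy function, as built into the definition) enters. I expect the genuinely delicate step to be justifying that the one-sided bounds $D^-_\mu(q) \le \essinf\underline d_\mu$ and $D^-_\mu(s) \ge \esssup\underline d_\mu$ hold with the essential sup/inf rather than the pointwise sup/inf — this requires choosing, for each $\delta>0$, a positive-measure subset on which the local dimension is within $\delta$ of the essential extremum and estimating the contribution of its complement, which is harmless precisely because $0 \le \mu(B(x,\ve)) \le 1$ controls the integrand uniformly when $q>1$ and the compact support controls it when $q<1$.
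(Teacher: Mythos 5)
The paper does not actually prove this proposition: it is obtained by citing Propositions~4.1 and~4.2 of the Barbaroux--Germinet--Tcheremchantsev reference together with Proposition~\ref{BGT}, so what you are really doing is reconstructing that cited argument. Your architecture is the right one (bound the energy integral $I_\mu(q,\ve)$ by restricting it to positive-measure sets where the local dimension is within $\delta$ of its essential infimum or supremum, translate via Proposition~\ref{BGT}, and get the $q=1$ sandwich from monotonicity of $L^p$-means on a probability space). One minor slip first: on a probability space $p\mapsto\|f\|_{L^p}$ is \emph{nondecreasing}, so $q\mapsto\log I_\mu(q,\ve)/(q-1)=\log\|\mu(B(\cdot,\ve))\|_{L^{q-1}(\mu)}$ is nondecreasing, not nonincreasing as you state; dividing by $\log\ve<0$ then yields the (correct) conclusion that $D^\pm_\mu(q)$ is nonincreasing in $q$. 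As written, your intermediate inequality and your ``flip'' are inconsistent with each other even though the endpoint is right.

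The substantive gap is in the two inequalities $D^-_\mu(q)\le\dim_H^-(\mu)=\essinf\underline{d}_\mu$ and $\dim_P^+(\mu)=\esssup\overline{d}_\mu\le D^+_\mu(s)$. You treat all four outer inequalities as instances of ``choose a positive-measure set where the local dimension is near its essential extremum and bound the integrand there.'' That works for $D^+_\mu(q)\le\essinf\overline{d}_\mu$ and for $\esssup\underline{d}_\mu\le D^-_\mu(s)$, because in those cases the hypothesis on the local dimension gives a ball estimate valid for \emph{all} sufficiently small $\ve$, which Egorov's theorem makes uniform on a positive-measure subset. But $\underline{d}_\mu(x)<\alpha+\delta$ only guarantees $\mu(B(x,\ve))\ge\ve^{\alpha+\delta}$ along a sequence of radii \emph{depending on $x$}, whereas $D^-_\mu(q)$ is a $\liminf$ along a single sequence of radii common to all $x$; Egorov does not uniformize a $\liminf$-type hypothesis, and your sketch supplies no substitute. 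A genuine extra argument is needed, e.g.: set $A_k=\{x\mid \mu(B(x,2^{-k}))\ge 2^{-k(\alpha+\delta)}\}$; a positive-measure set of points lies in infinitely many $A_k$, so $\sum_k\mu(A_k)=\infty$ by (the contrapositive of) Borel--Cantelli, hence $\mu(A_k)\ge 2^{-k(q-1)\delta}$ for infinitely many $k$, and for those $k$ one gets $I_\mu(q,2^{-k})\ge\mu(A_k)\,2^{-k(q-1)(\alpha+\delta)}\ge 2^{-k(q-1)(\alpha+2\delta)}$, whence $D^-_\mu(q)\le\alpha+2\delta$. The same issue, with the same fix, arises for $\dim_P^+(\mu)\le D^+_\mu(s)$. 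Your ``main obstacle'' paragraph correctly senses that the interchange of the integral with the limit in $\ve$ is the delicate point, but it locates the difficulty in the passage from pointwise to essential extrema (which is the easy part); the real difficulty is the $x$-dependence of the scales. The remaining ingredients you invoke --- finiteness of $I_\mu(s,\ve)$ and of $\int\log\mu(B(x,\ve))\,d\mu$ via a finite cover of the compact support, and $\lim_{p\to 0}\|f\|_{L^p}=\exp\left(\int\log f\,d\mu\right)$ for the $q=1$ case --- are correct as indicated.
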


A subset $\mathcal{R}$ of a topological space $X$ is said to be residual if $\mathcal{R}\supset\bigcap_{k\in \N}U_k$, where for each $k\in\N$, $U_k$ is open and dense. A topological space $X$ is a Baire space if every residual subset of $X$ is dense in $X$. By Baire Category Theorem, every complete metric space is a Baire space.

\begin{defi}
A property $\mathbb{P}$ is said to be generic in $X$ if there
exists a residual subset $\mathcal{R}$ of $X$ such that every element $x\in \mathcal{R}$ satisfies property $\mathbb{P}$.
\end{defi}

Note that, given a countable family of generic properties $\mathbb{P}_1,\mathbb{P}_2, \cdots$, all of them are simultaneously generic in $X$. This is because the family of residual sets is closed under countable intersections.

Although our analysis are focused on the full-shift system over uncountable alphabets, we can say something about other dynamical systems. Let $(X,T)$ be a topological dynamical system (that is, $X$ is a compact metric space and $T:X\rightarrow X$ is a continuous function). 
Denote by $\M_p$ the set of $T$-invariant periodic (also called $T$-closed orbit) measures, i.e, the set of measures of the form  $\mu_x(\cdot):=\frac{1}{k_{x}}\sum_{i=0}^{k_{x}-1}\delta_{T^ix}(\cdot)$, where $x\in X$ is a $T$-periodic point of period $k_x$. 

Our first result establishes that if $\M_p$ is dense in $\M(T)$, then generically, for each $s\in(0,1)$, $\mu\in \M(T)$ has $s$-lower generalized fractal dimension equal to zero. This density is particularly true for dynamical systems satisfying the specification property (such as Axiom A systems~\cite{Sigmund1974} and the actions of discrete countable residually finite amenable groups on compact metric spaces with specification property~\cite{Ren}), or even milder conditions (see~\cite{Abdenur,Gelfert,Hirayama,Kwietniak,Li,Liang} for the definitions of these conditions and examples of dynamical systems that satisfy them).

\begin{teo}
\label{teocentral0}
Let $(X,T)$ be a topological dynamical system 
and suppose that $\M_p$ is dense in $\M(T)$. Then, for each $s\in(0,1)$,
$$FD=\{\mu\in \M(T)\mid D^-_{\mu}(s)=0\}$$
is a residual subset of $\mathcal{M}(T)$.
\end{teo}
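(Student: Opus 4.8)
The plan is to exhibit $FD$ as a countable intersection of open dense subsets of $\M(T)$. For each $n\in\N$, set
$$
U_n=\Big\{\mu\in\M(T)\;\Big|\;\exists\,\ve\in(0,1/n)\text{ such that }\frac{\log I_\mu(s,\ve)}{(s-1)\log\ve}<\tfrac1n\Big\}.
$$
Since $s\in(0,1)$ we have $s-1<0$, so the condition $\frac{\log I_\mu(s,\ve)}{(s-1)\log\ve}<\tfrac1n$ is equivalent to $I_\mu(s,\ve)>\ve^{(s-1)/n}$ for some small $\ve$, i.e.\ to $I_\mu(s,\ve)$ being \emph{large}. Note $\bigcap_n U_n\subset FD$: if $\mu\in\bigcap_n U_n$, then $\liminf_{\ve\downarrow0}\frac{\log I_\mu(s,\ve)}{(s-1)\log\ve}\le 0$, and since this quantity is always $\ge0$ (as $I_\mu(s,\ve)\le 1$ because $0<\mu(B(x,\ve))^{s-1}$ is... — more carefully, $\mu(B(x,\ve))\le1$ and $s-1<0$ force $\mu(B(x,\ve))^{s-1}\ge1$, hence $I_\mu(s,\ve)\ge1$, hence $\log I_\mu(s,\ve)\ge0$ and $(s-1)\log\ve>0$ for $\ve<1$, so the ratio is $\ge0$), we get $D^-_\mu(s)=0$. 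Conversely $FD\subset U_n$ for every $n$. So $FD=\bigcap_n U_n$, and it suffices to show each $U_n$ is open and dense.

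\emph{Openness.} For fixed $\ve\in(0,1)$, I would show that $\mu\mapsto I_\mu(s,\ve)$ is lower semicontinuous on $\M$ in the weak topology (one can approximate $\mu(B(x,\ve))$ from below by integrals of continuous functions supported on slightly smaller balls, then use Fatou/portmanteau to get that the double integral defining $I_\mu(s,\ve)=\iint_{r(x,y)\le\ve}\,d\mu(y)^{?}$ — rather, write $I_\mu(s,\ve)=\int \mu(B(x,\ve))^{s-1}d\mu(x)$ and treat the map via a supremum of continuous functionals; since $s-1<0$, the power $t\mapsto t^{s-1}$ is decreasing and convex on $(0,1]$, so lower semicontinuity of $\mu\mapsto\mu(B(x,\ve))$ transfers appropriately). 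Granting lower semicontinuity, the set $\{\mu:I_\mu(s,\ve)>\ve^{(s-1)/n}\}$ is open, and $U_n$ is a union over $\ve$ of such sets, hence open.

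\emph{Density.} Here the hypothesis that $\M_p$ is dense in $\M(T)$ enters. Since $\M_p$ is dense, it is enough to show that every periodic measure $\mu_x$ (uniform on a finite orbit of $k$ points) lies in $\overline{U_n}$; in fact I will show periodic measures themselves lie in $U_n$. If $\mu=\mu_x$ is supported on $k$ points, then $\supp(\mu)$ is finite, so for all sufficiently small $\ve$ every ball $B(y,\ve)$, $y\in\supp(\mu)$, contains only $y$ itself, giving $\mu(B(y,\ve))=1/k$ and hence $I_\mu(s,\ve)=k\cdot\frac1k\cdot(1/k)^{s-1}=k^{1-s}$, a constant independent of $\ve$. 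Then $\frac{\log I_\mu(s,\ve)}{(s-1)\log\ve}=\frac{(1-s)\log k}{(s-1)\log\ve}=\frac{-\log k}{\log\ve}\to0$ as $\ve\downarrow0$. So for every $n$ there is $\ve<1/n$ with this ratio below $1/n$, i.e.\ $\mu_x\in U_n$. Thus $\M_p\subset U_n$, and since $\M_p$ is dense, $U_n$ is dense.

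\emph{Main obstacle.} The only genuinely delicate point is the lower semicontinuity of $\mu\mapsto I_\mu(s,\ve)$ in the weak topology, because $\mu\mapsto\mu(B(x,\ve))$ is in general only upper semicontinuous for closed balls and lower semicontinuous for open balls, and one is integrating a \emph{decreasing} nonlinear function of it against $\mu$ itself (so $\mu$ appears twice). I would handle this by passing to open balls and a standard approximation: write $\mathbf 1_{B(x,\ve)}(y)$ as an increasing limit of continuous functions $g_j(x,y)$, note $I_\mu(s,\ve)=\int\big(\int\mathbf1_{B(x,\ve)}(y)\,d\mu(y)\big)^{s-1}d\mu(x)$, and exploit monotone convergence together with the portmanteau theorem; alternatively, one may first establish the density claim (which needs no semicontinuity) and then, for openness, use a slightly larger radius $\ve'>\ve$ to absorb the one-sided continuity defect, at the cost of replacing $U_n$ by an equivalent family. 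Either way, the combinatorial heart — that periodic measures sit in every $U_n$ — is immediate from their finite support.

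\hfill$\Box$
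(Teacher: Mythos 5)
Your overall architecture (represent the target set as $\bigcap_n U_n$ with each $U_n$ open and dense, and put the periodic measures inside every $U_n$) is sound, and your density argument is essentially the paper's: for a periodic measure supported on $k$ points one gets $I_\mu(s,\ve)=k^{1-s}$ for all small $\ve$, so the log-ratio tends to $0$. The gap is in the openness half, and it starts with a sign error. Since $(s-1)\log\ve>0$ for $\ve<1$ and $s<1$, the condition $\frac{\log I_\mu(s,\ve)}{(s-1)\log\ve}<\frac1n$ is equivalent to $I_\mu(s,\ve)<\ve^{(s-1)/n}$ (a quantity larger than $1$), not to $I_\mu(s,\ve)>\ve^{(s-1)/n}$. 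So the sets you need to be open are strict \emph{sublevel} sets of $\mu\mapsto I_\mu(s,\ve)$, which requires \emph{upper} semicontinuity of that map; your sketch aims at lower semicontinuity, i.e.\ at the wrong inequality.

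Even after correcting the direction, the needed upper semicontinuity of $\mu\mapsto I_\mu(s,\ve)$ is exactly the hard point, and none of your proposed fixes addresses the real obstruction: for $s<1$ the integrand $\mu(B(x,\ve))^{s-1}$ is an \emph{unbounded}, decreasing function of the ball mass (balls of tiny mass contribute enormously), and both the integrand and the integrating measure vary with $\mu$, so monotone/dominated convergence combined with portmanteau does not go through; enlarging the radius alone does not remove the unboundedness either. The paper sidesteps this entirely. It first bounds $I_\mu(s,2\ve)\le S_\mu(s,\ve)=\inf_{\mathcal{G}}\sum_j\mu(B(x_j,\ve))^s$ by a covering sum whose terms carry the \emph{positive} exponent $s$ and are therefore bounded by $1$; it then replaces $\mu(B(x_j,\ve))$ by the continuous functional $f_\ve(\mu,x_j)$ to obtain $W_\mu(s,\ve)$, an infimum over finite coverings of continuous functions of $\mu$, hence upper semicontinuous, so the relevant sublevel sets are open. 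This produces an auxiliary quantity $d_\mu^-(s)\ge D_\mu^-(s)$ and a dense $G_\delta$ set $D_-^*=\{\mu\mid d_\mu^-(s)=0\}$ \emph{contained in} $FD$; note the paper only concludes that $FD$ contains a dense $G_\delta$ (residuality), whereas your claim $FD=\bigcap_nU_n$ commits you to showing $FD$ itself is $G_\delta$, which your argument cannot deliver without the unproved semicontinuity. (This is also why the paper can treat the case $q>1$ directly through the energy function in Lemma~\ref{asympt1}: there the exponent $q-1$ is positive, so $t\mapsto t^{q-1}$ is bounded and continuous on $[0,1]$, and the difficulty you are facing simply does not arise.)
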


The next result is a direct consequence of Theorem \ref{teocentral0} and Proposition \ref{BGT1}.

\begin{cor}
\label{Corolario01}
Let $(X,T)$ be a topological dynamical system and suppose that $\M_p$ is dense in $\M(T)$. Then,
\[HD=\{\mu\in \M(T)\mid \dim_H^+(\mu)=0\}\]
 is a residual subset of $\mathcal{M}(T)$.
\end{cor}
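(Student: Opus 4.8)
The plan is to deduce the residuality of $HD$ directly from Theorem~\ref{teocentral0}, using the chain of inequalities in Proposition~\ref{BGT1} together with the elementary fact that any subset of $\M(T)$ containing a residual set is itself residual.

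First I would fix an arbitrary $s\in(0,1)$, say $s=\tfrac12$, and invoke Theorem~\ref{teocentral0} to conclude that $FD=\{\mu\in\M(T)\mid D^-_{\mu}(s)=0\}$ is residual in $\M(T)$; this is legitimate since $(X,T)$ is a topological dynamical system with $\M_p$ dense in $\M(T)$, and $\M(T)$, being a closed subset of the compact metrizable space $\M$, is a complete metric space and hence a Baire space. By definition of residuality, $FD$ contains a countable intersection $\bigcap_{k\in\N}U_k$ of open dense subsets of $\M(T)$.

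Next I would establish the inclusion $FD\subseteq HD$. By Proposition~\ref{BGT1}, for every Borel probability measure $\mu$ on $X$ and every $0<s<1$ one has $\dim_H^+(\mu)\le D^-_{\mu}(s)$. Since the Hausdorff dimension of any nonempty set is non-negative, so is $\dim_H^+(\mu)$; therefore $D^-_{\mu}(s)=0$ forces $\dim_H^+(\mu)=0$. Hence every $\mu\in FD$ belongs to $HD$, i.e. $FD\subseteq HD$, and consequently $HD\supseteq\bigcap_{k\in\N}U_k$, which shows that $HD$ is residual in $\M(T)$.

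There is essentially no obstacle in this argument beyond checking the (trivial) non-negativity of $\dim_H^+$ and the (standard) stability of the class of residual sets under taking supersets; all of the substantive work is already carried out in Theorem~\ref{teocentral0}. If desired, one could alternatively intersect the residual sets $FD$ over a countable sequence $s_n\uparrow 1$ (or over all rational $s\in(0,1)$) before invoking the inclusion, but this is unnecessary since a single value of $s$ already suffices.
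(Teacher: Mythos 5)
Your proposal is correct and follows exactly the route the paper indicates: the paper derives Corollary~\ref{Corolario01} as a direct consequence of Theorem~\ref{teocentral0} and the inequality $\dim_H^+(\mu)\le D^-_{\mu}(s)$ from Proposition~\ref{BGT1}, combined with the fact that a superset of a residual set is residual. Your explicit verification of the non-negativity of $\dim_H^+$ and the superset stability is a fine (if routine) elaboration of the same argument.
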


The first consequence of Corollary \ref{Corolario01} is that a typical invariant measure (of such dynamical systems) is supported on a set $Z\subset X$ satisfying $\dim_H(Z)=0$; moreover, given that $\dim_H(Z)\geq \dim_{top}(Z)$ (see~\cite{Hurewicz}, Sect. 4, page 107, for a proof of this inequality), one has that $Z$ is totally disconnected. Now, if $(X,T)$ satisfies the specification property, it is known that $\M_X$, the set of invariant measures with $\supp(\mu)=X$, is a dense $G_{\delta}$ subset of $\M(T)$ (see~\cite{Sigmundlibro,Sigmund1974}); thus, in this case,  $Z$ is a totally disconnected and dense subset of $X$.

One must compare Corollary~\ref{Corolario01} with Theorem 1.1 (III) in~\cite{AS}; although $X=\prod_{-\infty}^{+\infty}M$ may not be compact in Theorem 1.1 (III), $X$ must be endowed with a metric such that $T$ and $T^{-1}$ are both Lipschitz (here, it is only required that the induced topology and the product topology are compatible).

We are also interested in the returning rates of a point to an arbitrarily small neighborhood of itself (that is, in a quantitative description of Poincar\'e's recurrence). This question was originally studied by Barreira and Saussol  in~\cite{Barreiral2001} (see also~\cite{Barreira2008,Saussol2009} for a broader discussion about the topic and \cite{Bao,Hu-Xue-Xu} for other approaches to the problem). Considering now $(X,r)$ as a separable metric space and $T$ as a Borel measurable transformation, one may define the lower and the upper recurrence rates of $x\in X$ in the following way: define the return time of a point $x \in X$ to the open ball $B(x,s)$ by
\begin{eqnarray*}
\tau_s(x)=\inf\{k\in\N\mid T^k x \in B(x,s)\},
\end{eqnarray*}
and the lower and the upper recurrence rates of $x$, respectively, by
\begin{eqnarray*}
\underline{R}(x)=\liminf_{s\to 0}\frac{\log \tau_s(x)}{-\log s} ~~\mbox{ and } ~~  \overline{R}(x)=\limsup_{s\to 0}\frac{\log \tau_s(x)}{-\log s}.
\end{eqnarray*}
 Under these conditions, they have showed (Theorem 2 in \cite{Barreiral2001}) that $\underline{R}(x)\leq \dim_H^+(\mu)$ for $\mu$-a.e. $x\in X$. Combining this result with Corollary \ref{Corolario01}, one has the following result.

\begin{cor}
\label{Corolario001}
Let $(X,T)$ be a topological dynamical system and suppose that $\M_p$ is dense in $\M(T)$. Then,
\[\underline{\mathcal{R}}=\{\mu\in \M(T)\mid \underline{R}(x)=0,\;\textrm{for\;\,} \mu\textrm{-a.e.}\; x\}\] is a residual subset of $\mathcal{M}(T)$.
\end{cor}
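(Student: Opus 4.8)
The plan is to obtain this as a formal consequence of Corollary~\ref{Corolario01} together with the Barreira--Saussol recurrence inequality. Recall from Theorem~2 in~\cite{Barreiral2001} that, for a separable metric space $X$, a Borel measurable transformation $T$ and a $T$-invariant Borel probability measure $\mu$, one has $\underline{R}(x)\le\dim_H^+(\mu)$ for $\mu$-a.e.\ $x\in X$. The key observation is that the set $HD=\{\mu\in\M(T)\mid\dim_H^+(\mu)=0\}$ of Corollary~\ref{Corolario01} is already residual in $\M(T)$, so it suffices to prove the inclusion $HD\subseteq\underline{\mathcal{R}}$: any subset of a topological space that contains a residual set is itself residual, since it then contains the same countable intersection of open dense sets.

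First I would verify that the hypotheses of Theorem~2 in~\cite{Barreiral2001} hold in the present setting: $X$ is a compact metric space, hence separable, and $T$ is continuous, hence Borel measurable, so for every $\mu\in\M(T)$ we get $\underline{R}(x)\le\dim_H^+(\mu)$ for $\mu$-almost every $x$. Next I would record the elementary lower bound $\underline{R}(x)\ge 0$ valid for every $x\in X$: since $\tau_s(x)=\inf\{k\in\N\mid T^kx\in B(x,s)\}\ge 1$, one has $\log\tau_s(x)\ge 0$, while $-\log s>0$ for $s\in(0,1)$, so each quotient $\log\tau_s(x)/(-\log s)$ is nonnegative and therefore so is its lower limit as $s\downarrow 0$. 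Combining the two, if $\mu\in HD$ then $0\le\underline{R}(x)\le\dim_H^+(\mu)=0$ for $\mu$-a.e.\ $x$, so $\underline{R}(x)=0$ $\mu$-a.e., i.e.\ $\mu\in\underline{\mathcal{R}}$. This gives $HD\subseteq\underline{\mathcal{R}}$, and since $HD$ is residual by Corollary~\ref{Corolario01}, so is $\underline{\mathcal{R}}$.

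I do not expect a real obstacle here: the statement is a direct combination of an external theorem with an already-established genericity result, and the only points needing attention are checking that Barreira--Saussol's assumptions are met for $(X,T,\mu)$ and noting the (trivial) nonnegativity of $\underline{R}$. The substantive work lies upstream, in Theorem~\ref{teocentral0} and Corollary~\ref{Corolario01}.
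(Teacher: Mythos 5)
Your proof is correct and follows exactly the paper's route: combine the Barreira--Saussol inequality $\underline{R}(x)\le\dim_H^+(\mu)$ ($\mu$-a.e.) with the residuality of $HD$ from Corollary~\ref{Corolario01} to get $HD\subseteq\underline{\mathcal{R}}$. Your explicit checks of the hypotheses and of the nonnegativity of $\underline{R}$ are details the paper leaves implicit, but the argument is the same.
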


As before, one may establish the same kind of comparison between Corolary~\ref{Corolario001} and Theorem 1.1 (V) in~\cite{AS}: here, it is required that $X$ is compact (there, it is sufficient for $X$ to be Polish); here, the metric may be any one compatible with the product topology (there, it must be such that $T$ and $T^{-1}$ are both Lipschitz).

Returning to the full-shift system, we consider now the case where $X$ is a perfect (that is, none of its points is isolated) and compact metric space (in this case, the so-called alphabet where the shift is defined is uncountable), and endowed with a sub-exponential metric of the form~\eqref{metric}.

\begin{teo}
\label{teocentral1}
Let $(X,T)$ be the full-shift system, where the product space $X=\prod_{-\infty}^{\infty}M$ is endowed with the metric~\eqref{metric} and $M$ is a compact and perfect metric space, and let $q>1$. Then, 
$$CD=\{\mu\in \M(T)\mid D^+_{\mu}(q)=+\infty\}$$
is a dense $G_{\delta}$ subset of $\mathcal{M}(T)$.
\end{teo}

Theorems \ref{Pesincorrelation}, \ref{teocentral0} and \ref{teocentral1} may be combined with Proposition~\ref{BGT1} in order to produce the following result. Let $q\in\N\setminus\{1\}$; if $\mu\in FD\cap CD$, then there exists a Borel set $Z\subset X$, $\mu(Z)=1$, such that for each $x\in Z$, one has $\underline{\alpha}_q(x)= D^-_{\mu}(q)=0$ and  $\overline{\alpha}_q(x)=D^+_{\mu}(q)=\infty$.

This means that if $x\in Z$, since $\underline{\alpha}_q(x)=0$, it follows that given $0<\alpha\ll 1$ and $R>0$, there exist a radial sequence $(\varepsilon_k)$, with $\varepsilon_k\in(0,R)$,  and an $N_k=N_k(x,\alpha,R)\in \N$ such that, for each $n>N_k$, one has $C_q(x,n,\varepsilon_k)\geq\varepsilon_k^{(q-1)\alpha}$.  Thus, there exists a scale (defined by $(\varepsilon_k)$) such that $F_k=\card\,\{(i_1\cdots i_q)\in \{0,1,\cdots, n\}^q\mid r(T^{i_j}(x),T^{i_l}(x))\leq \ve_k$ for each $0\leq j,l\leq q\}\geq \ve_k^{(q-1)\alpha} \, n^q$; in this scale, the quantity $F_k$ is of order $n^q$ for each $n$ and each $k$ large enough. This means that, at least in this scale, the orbit of a typical point (with respect to $\mu$) is very ``tight'' (it is some sense, similar to a periodic orbit).

Nonetheless, since $\overline{\alpha}_q(x)=+\infty$, it follows that given $\beta\gg 1$ and $S>0$, there exist a radial sequence $(s_\ell)$, with $s_{\ell}\in(0,S)$, and an $N_{\ell}\in \N$ such that, for each $n>N_{\ell}$, one has $C_q(x,n,s_{\ell})\leq s_{\ell}^{(q-1)\beta}$.  Thus, there exists a scale such that $P_{\ell}=\card\,\{(i_1\cdots i_q)\in \{0,1,\cdots, n\}^q\mid r(f^{i_j}(x),f^{i_l}(x))\leq s_{\ell}$ for each $0\leq j,l\leq q\}\leq s_{\ell}^{(q-1)\beta} \, n^q$; in this scale, $P_{\ell}$ is of lesser order than $n^q$, which means that (at least in this scale) the orbit of a typical point spreads fast (leading to a behaviour which is similar to an hyperbolic system). 

In summary, the orbit of a point $x\in Z$ has a very complex structure, being ``tight'' for some spatial scale, and spreading rapidly throughout the space for another scale. 

\begin{rek}
One should ask if there is a contradiction between Theorem~\ref{teocentral1} and Theorem~1.6 in \cite{AS2}, which states that if $(X, f)$ is an expansive dynamical system, then at least under a hyperbolic metric (see \cite{Fathi1989} for the definition), the set $\left\{\mu \in \mathcal{M}(f) \mid D_{\mu}^{+}(q)=0, q>1\right\}$ is generic.

We note that if $M$ is an infinite compact metric space, then $(X, T)$ is not expansive. Namely, if $\operatorname{dim}_{\operatorname{top}}(M)=0,$ the statement follows from Hedlund-Reddy's Theorem (Theorem 2.4 in \cite{Downarowicz}), since $(X, T)$ is not conjugate to a subshift; if $\operatorname{dim}_{\text {top }}(M)>0$ (which is only possible if $M$ is uncountable), the statement follows from Theorem 5.3 in \cite{Fathi1989}, since in this case, $\operatorname{dim}_{\operatorname{top}}(X)=$ $+\infty$. Now, it follows again from Theorem 5.3 in \cite{Fathi1989} that $X$ does not admit a hyperbolic metric (so, in particular, the metric \eqref{metric} is not hyperbolic for such alphabets). Hence, there is no contradiction between Theorem~\ref{teocentral1} and Theorem 1.6 in \cite{AS2}.

So, in terms of expansiveness, there is a striking difference between the full-shift defined over finite and over infinite (compact) alphabets.

We emphasize, nevertheless, that in order to prove Theorem~\ref{teocentral1}, we use the fact that $M$ does not have isolated points (this assumption was used in the proof of Lemma~\ref{L6S2}). Moreover, the fact that the full-shift over an infinite compact metric space $M$ is not expansive possibly indicates that $\left\{\mu \in \mathcal{M}(T) \mid D_{\mu}^{+}(q)=+\infty, q>1\right\}$ is a generic subset of $\mathcal{M}(T),$ regardless of the choice of exponential or sub-exponential metrics of type~\eqref{metric} (it is not hard to see that for such spaces, $\operatorname{dim}_{P}(X)=+\infty$); however, as discussed in Remark~\ref{remakdense}, the strategy adopted in this work to prove such result fails for exponential metrics, which may suggest, nevertheless, that in this setting, $\left\{\mu \in \mathcal{M}(T) \mid D_{\mu}^{+}(q)=+\infty, q>1\right\}$ is a meager subset of $\mathcal{M}(T)$, although $\left\{\mu \in \mathcal{M}(T) \mid \dim_P^+(\mu)=+\infty\right\}$ is generic (see Theorem~1.1 in~\cite{AS}).

One could also speculate that if $M$ is finite, then $\left\{\mu \in \mathcal{M}(T) \mid D_{\mu}^{+}(q)=0,~ q>1\right\}$ is a generic subset of $\mathcal{M}(T),$ regardless of the metric defined over $X$.   
\end{rek}

\

Combining Corollary~\ref{Corolario01} and Theorem \ref{teocentral1} with Proposition \ref{BGT1}, one gets the following result. 

\begin{cor}
\label{Corolario02}
Let $(X,T)$ be the full-shift system, $X=\prod_{-\infty}^{\infty}M$, where $M$ is a compact and perfect metric space. Let $X$ be endowed with the metric~\eqref{metric}. Then,
\[HP=\{\mu\in \M_e\mid \dim_H^+(\mu)=0 \mbox{ and } \dim_P^-(\mu)=\infty\}\] is a residual subset of $\mathcal{M}(T)$.
\end{cor}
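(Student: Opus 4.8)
The plan is to realize $HP$ as a finite intersection of residual subsets of $\M(T)$, using that (countable) intersections of residual sets remain residual. The first step is to confirm that the present system --- the full-shift $(X,T)$ with $X=\prod_{-\infty}^{\infty}M$, $M$ perfect and compact, and $X$ endowed with the metric~\eqref{metric}, which is compatible with the product topology so that $\M(T)$ is compact metrizable and hence a Baire space --- satisfies the hypotheses of the results invoked below: namely, that $\M_p$ is dense in $\M(T)$, so that Corollary~\ref{Corolario01} applies, and that the set $\M_e$ of ergodic measures is a residual (in fact dense $G_{\delta}$) subset of $\M(T)$. Both facts are consequences of the specification property of the full shift over a compact metric alphabet (equivalently, of a direct concatenation-and-shadowing argument for periodic sequences, carried out for the metric~\eqref{metric}); see~\cite{Sigmund1974,Sigmundlibro}.

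Granting this, by Corollary~\ref{Corolario01} the set $HD:=\{\mu\in\M(T)\mid \dim_H^+(\mu)=0\}$ is residual. For the packing dimension, fix any $q>1$: by Theorem~\ref{teocentral1} the set $CD=\{\mu\in\M(T)\mid D^+_{\mu}(q)=+\infty\}$ is a dense $G_{\delta}$, hence residual, and by Proposition~\ref{BGT1} one has $D^+_{\mu}(q)\le\dim_P^-(\mu)$ for every $\mu\in\M(T)$; consequently $CD\subseteq PD:=\{\mu\in\M(T)\mid \dim_P^-(\mu)=+\infty\}$, and since any superset of a residual set is residual, $PD$ is residual.

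It only remains to observe that $HP=\M_e\cap HD\cap PD$, so that $HP$, being an intersection of three residual subsets of $\M(T)$, is itself residual. The argument is essentially the assembly of already-established results; the single point that genuinely requires care is the verification, in the first step, that for the full-shift system one indeed has $\M_p$ dense and $\M_e$ residual in $\M(T)$ --- this is the step I would write out in detail, the rest being formal.
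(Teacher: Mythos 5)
Your argument is correct and follows essentially the same route as the paper, which simply combines Corollary~\ref{Corolario01}, Theorem~\ref{teocentral1} and Proposition~\ref{BGT1} (using that $D^+_{\mu}(q)\le\dim_P^-(\mu)$ and that residual sets are closed under finite intersection). You are in fact slightly more careful than the paper in explicitly noting that $\M_p$ is dense and that $\M_e$ is a dense $G_\delta$ in $\M(T)$ for the full shift (Sigmund's results), a step the paper leaves implicit.
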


Again, one may compare Corollary~\ref{Corolario02} with Theorem 1.1 (III-IV). Here, $X$ is compact, perfect and endowed with the metric~$\eqref{metric}$. There, $X$ is Polish, perfect and endowed with any metric such that $T$ and $T^{-1}$ are both Lipschitz.  

By Corollary~\ref{Corolario02}, each $\mu\in HP\cap \M_{X}$ is supported on a set $Z\subset X$ with $\dim_{H}(Z)=0$ and $\dim_P(Z)=\infty$. Thus, $Z$ is a dense and totally disconnected subset of $X$ (suppose that $Z$ is not dense; then, there exist $x \in X$ and $\ve>0$ such that $B(x,\ve)\cap Z = \emptyset$. This results in $1 = \mu(Z) + \mu(B(x,\ve))$, which is an absurd, since $\mu(B(x,\ve)) > 0$). 

Finally, we may also say something about the typical lower and upper entropy dimensions of an invariant measure. Combining Theorems~\ref{teocentral0} and \ref{teocentral1} with Proposition \ref{BGT1}, the following result holds.

\begin{cor}
\label{Corolario03}
Let $(X,T)$ be the full-shift system, where the product space $X=\prod_{-\infty}^{\infty}M$ is endowed with the metric~\eqref{metric} and $M$ is a compact and perfect metric space, and let $q>1$. Then, each of the sets 
 $$ED_-=\{\mu\in \M(T)\mid D^-_{\mu}(1)=0\},$$
 $$ED_+=\{\mu\in \M(T)\mid D^+_{\mu}(1)=+\infty\}$$
is residual in $\mathcal{M}(T)$.
\end{cor}

The paper is organized as follows. In Section~\ref{GdeltaS}, we show that for each $s\in(0,1)$ and each $q>1$, both $D:=\{\mu\in \M\mid d^-_{\mu}(s)=0\}$ (see Proposition~\ref{zerolowerfractal} for a definition of $d^-_{\mu}(s)$) and $CD:=\{\mu\in \M\mid D^+_{\mu}(q)=+\infty\}$ are $G_\delta$ sets. In Section~\ref{density}, we show that these sets are dense in $\M(T)$. Finally, we present in Section~\ref{T1.21.3} the proof of Theorems~\ref{teocentral0} and~\ref{teocentral1}.

\section{$G_{\delta}$ sets}\label{GdeltaS}

In this section, $X$ is always a compact metric space. 

\subsection{$G_{\delta}$ sets for $s\in(0,1)$}

Let $\mu\in\M$, let $s\in(0,1)$ and let $\mathcal{G}=\{B(x_j,\ve)\}$ be some countable covering of $X$ by balls of radius $\ve>0$. Let $\tilde{\mathcal{G}}=\{B(x_i,\ve)\}\subset \mathcal{G}$ be a sub-covering of $X$ that also covers $\supp(\mu)$.

For each  $x\in B(x_i,\ve)$, one has $B(x_i,\ve)\subset B(x,2\ve)$, from which follows that, for each $x\in B(x_i,\ve)\cap\supp(\mu)$, $\mu(B(x_i,\ve))^{s-1}\ge \mu(B(x,2\ve))^{s-1}$; hence,
\begin{eqnarray}
\label{eq4}
I_{\mu}(s,2\ve)\nonumber
&= & \int_{\supp(\mu)}\mu(B(x,2\ve))^{s-1}d\mu(x)\nonumber 
\leq  \sum_{x_i\in \tilde{\mathcal{G}}}\int_{B(x_i,\ve)\cap\supp(\mu)} \mu(B(x,2\ve))^{s-1}d\mu(x)\nonumber \\
&\leq & \sum_{x_i\in \tilde{\mathcal{G}}}\int_{B(x_i,\ve)\cap\supp(\mu)} \mu(B(x_i,\ve))^{s-1}d\mu(x)
=\nonumber \sum_{x_i\in \tilde{\mathcal{G}}} \mu(B(x_i,\ve))^s \\
&\leq &\sum_{x_j\in \mathcal{G}} \mu(B(x_j,\ve))^s
\end{eqnarray}
(by $x\in \mathcal{G}$ one means that $B(x,\ve)\in\mathcal{G}$; we will use this  notation throughout the text).

Naturally, since $X$ is a compact metric space, one can assume, without loss of generality, that $\mathcal{G}$ is always a finite covering of $X$.

\begin{defi}
\label{sumacubos}
Let $\mu\in\M$. One defines, for each $s\in (0,1)$  and each $\ve>0$,
$$S_{\mu}(s,\ve)=\inf_{\mathcal{G}}\sum_{x_j\in \mathcal{G}} \mu(B(x_j,\ve))^s,$$
where the infimum is taken over all finite coverings, $\mathcal{G}$, of $X$ by balls of radius $\ve$ (as above).
\end{defi}

\begin{rek} One must compare Definition~\ref{sumacubos} with Definition~(8.6) in~\cite{Pesin1997}. 
\end{rek}

Let, for each $x\in X$ and each $\ve>0$,  
\begin{eqnarray*}
\chi_{_{B(x,\ve)}}(y)= \left\{ \begin{array}{lcc}
            1 & ,if  & d(x,y) < \ve , \\
            
             \\ 0 &   ,if  & d(x,y) \ge \ve, 
             \end{array}
\right.
\end{eqnarray*}
and note that $\mu(B(x,\ve))=\int \chi_{B(x,\ve)}(y)d\mu(y)$. Since, for each $x\in X$ and each $\ve>0$, $\chi_{_{B(x,\ve)}}:X\rightarrow[0,1]$ is not necessarily continuous, one needs to approximate, for each $\ve>0$, the mapping $\mathcal{M}\times X\ni(\mu,x)\mapsto \mu(B(x,\ve))\in[0,1]$ (in the product topology of $\mathcal{M}\times X$) by a continuous one. This motivates the next result. 

\begin{lema}
  \label{asympt0}
Fix $\ve>0$. Then, the function $f_{\ve}(\,\cdot\, ,\,\cdot\,):\M\times X\rightarrow [0,1]$, given by the law
  \[f_{\ve}(\mu,x)=\int f^{\ve}_x(y)d\mu(y),\]
 where
 $f^{\ve}_x:X\rightarrow[0,1]$ is defined by 
\begin{eqnarray*} f^{\ve}_x(y)= \left\{ \begin{array}{lcc}
                    
            1 & ,if  & d(x,y) \leq \ve, \\
            \\ -\dfrac{d(x,y)}{\ve}+2&, if & \ve\le d(x,y)\le 2\ve,\\
            \\ 0 &   ,if  & d(x,y) \geq 2\ve,
          \end{array}
\right.\end{eqnarray*}
is jointly continuous on $\M\times X$.
\end{lema}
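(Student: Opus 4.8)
The plan is to prove continuity of $f_\ve$ in two stages: first observe that for each fixed $x$ the function $f^\ve_x:X\to[0,1]$ is continuous (it is a composition of the continuous map $y\mapsto d(x,y)$ with the piecewise-linear, hence continuous, ``tent-plateau'' function $t\mapsto\max\{0,\min\{1,2-t/\ve\}\}$), and moreover the whole family $\{f^\ve_x\}_{x\in X}$ is uniformly equicontinuous, because each $f^\ve_x$ is Lipschitz with constant $1/\ve$ (its values change by at most $|d(x,y)-d(x,y')|/\ve\le d(y,y')/\ve$). Then I would exploit the joint dependence: the map $X\times X\ni(x,y)\mapsto f^\ve_x(y)$ is continuous, being $\Phi\circ d$ where $\Phi$ is the tent-plateau function and $d:X\times X\to\R$ is the (continuous) metric. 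Since $X$ is compact, this map is in fact uniformly continuous on $X\times X$.

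Next I would establish continuity of $(\mu,x)\mapsto f_\ve(\mu,x)=\int f^\ve_x\,d\mu$ directly from the definition of the product topology on $\M\times X$. Fix $(\mu_0,x_0)$ and $\delta>0$. I would split
\[
\bigl|f_\ve(\mu,x)-f_\ve(\mu_0,x_0)\bigr|\le\Bigl|\int f^\ve_x\,d\mu-\int f^\ve_{x_0}\,d\mu\Bigr|+\Bigl|\int f^\ve_{x_0}\,d\mu-\int f^\ve_{x_0}\,d\mu_0\Bigr|.
\]
The second term is small for $\mu$ in a suitable weak-topology neighborhood of $\mu_0$, precisely because $f^\ve_{x_0}$ is a fixed bounded continuous function and weak convergence is tested against such functions. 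The first term is bounded by $\sup_{y\in X}|f^\ve_x(y)-f^\ve_{x_0}(y)|$ (since $\mu$ is a probability measure), and by the uniform continuity of $(x,y)\mapsto f^\ve_x(y)$ on the compact space $X\times X$ this supremum is $<\delta/2$ once $x$ is close enough to $x_0$. Combining the two estimates gives $|f_\ve(\mu,x)-f_\ve(\mu_0,x_0)|<\delta$ on a neighborhood of $(\mu_0,x_0)$, which is continuity.

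The only mildly delicate point — the ``main obstacle,'' though it is not a serious one — is handling the $x$-dependence of the integrand, since weak convergence of measures a priori only controls integrals of a \emph{fixed} continuous function; this is exactly why the mollified kernel $f^\ve_x$ (rather than $\chi_{B(x,\ve)}$) is introduced, and why the uniform continuity of $(x,y)\mapsto f^\ve_x(y)$ on $X\times X$, granted by compactness of $X$, is the key ingredient. I would also remark that $f_\ve$ indeed takes values in $[0,1]$, since $0\le f^\ve_x\le 1$ pointwise and $\mu(X)=1$, so the stated codomain is correct. No metric-specific hypotheses (such as the sub-exponential form~\eqref{metric}) are needed here; the argument works for any compact metric space $X$.
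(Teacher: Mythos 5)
Your argument is correct, and it takes a genuinely different (and more elementary) route than the paper. The paper fixes sequences $\mu_m\to\mu$ and $x_n\to x$, computes the iterated limits of the double sequence $f_\ve(\mu_m,x_n)$, verifies that the convergence in $n$ is uniform in $m$, and then invokes Moore--Osgood-type theorems on double limits (Theorems 2.13 and 2.15 of the cited reference) to conclude that the diagonal sequence converges; this yields sequential continuity, which suffices since $\M\times X$ is metrizable. You instead prove continuity directly by the two-term splitting
\[
\bigl|f_\ve(\mu,x)-f_\ve(\mu_0,x_0)\bigr|\le\sup_{y\in X}\bigl|f^\ve_x(y)-f^\ve_{x_0}(y)\bigr|+\Bigl|\int f^\ve_{x_0}\,d\mu-\int f^\ve_{x_0}\,d\mu_0\Bigr|,
\]
where the first term is bounded uniformly in $\mu$ because $\mu$ is a probability measure, and the second is controlled by a basic weak-topology neighborhood determined by the single fixed continuous function $f^\ve_{x_0}$. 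Both proofs ultimately rest on the same two facts --- that $x\mapsto f^\ve_x$ varies uniformly (in the paper: $f^\ve_{x_n}\to f^\ve_x$ uniformly on $X$; in yours: the Lipschitz bound $|f^\ve_x(y)-f^\ve_{x_0}(y)|\le d(x,x_0)/\ve$) and that weak convergence is tested against fixed bounded continuous functions --- but your version avoids the external double-limit machinery entirely and gives topological (not merely sequential) continuity. One small simplification you could make: the Lipschitz estimate in $x$, uniform in $y$, already supplies everything you need for the first term, so the appeal to compactness of $X\times X$ for uniform continuity of $(x,y)\mapsto f^\ve_x(y)$ is redundant; this is consistent with your closing remark that the lemma holds on any (indeed, any Polish) metric space, which matches the paper's own observation that the result is proved in that generality elsewhere.
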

\begin{proof}
This result is proven in~\cite{AS} for any Polish metric space. We present its proof for the reader's sake.

Note that, for each $x\in X$ and each $\ve>0$, $f^{\ve}_x: X\rightarrow \R$ is a continuous function such that $ \chi_{_{B(x,\ve)}}(y) \leq f_{x}^{\ve}(y)\leq \chi_{_{B(x,2\ve)}}(y)$. Given that $f_x^\ve(y)$ depends only on $d(x,y)$, it is straigthfoward to show that $f^{\ve}_{x_n}$ converges uniformly to $f^{\ve}_x$ on $X$ when $x_n\rightarrow x$.

We will use Theorem 2.15 in \cite{Habil} in order to prove that $f_\ve(\mu,x)$ is jointly continuous. Let $(\mu_m)$ and $(x_n)$ be sequences in $\M$ and $X$, respectively, such that $\mu_m\to \mu$ and  $x_n\to x$. Firstly, we show that 
$$\lim_{m\to \infty}\lim_{n\to\infty} f_{\ve}(\mu_m,x_n)=\lim_{m\to \infty}\lim_{n\to\infty}\int f^{\ve}_{x_n}(y)d\mu_m(y)=f_{\ve}(\mu,x).$$
 
Since, for each $y\in X$, $|f^{\ve}_{x_n}(y)|\leq 1$, it follows from dominated convergence that, for each $m\in\mathbb{N}$, $\lim_{n\to\infty}\int f^{\ve}_{x_n}(y)d\mu_m(y)=\int f^{\ve}_{x}(y)d\mu_m(y)$. Now, since $f_x^\ve$ is continuous, it follows from the the definition of weak convergence that
\begin{equation*}
\lim_{m\to \infty}\lim_{n\to\infty}\int f^{\ve}_{x_n}(y)d\mu_m(y)= \lim_{m\to \infty}\int f^{\ve}_{x}(y)d\mu_m(y)=f_{\ve}(\mu,x).
\end{equation*}

The next step consists in showing that, for each $n\in \mathbb{N}$, the function $\f_n:\mathbb{N}\rightarrow \mathbb{N}$, defined by $\f_n(m):= f_{\ve}(\mu_m,x_n)$,  converges  uniformly in $m\in\N$ to $\f(m):=\lim_{n\to \infty}f_{\ve}(\mu_m,x_n)=\int f^{\ve}_{x}(y)d\mu_m(y)$. Let $\delta>0$ and fix $m\in \N$. Since $f^{\ve}_{x_n}(y)$ converges uniformly to $f^{\ve}_{x}(y)$, there exists $N\in \N$ such that, for each $n\ge N$ and each $y\in X$, $\left| f^{\ve}_{x_n}(y)- f^{\ve}_{x}(y)\right|<\delta$. Then, one has, for each $n\geq N$ and each  $m\in \N$, 
\begin{eqnarray*}
|\f_n(m)-\f(m)|=\left| \int f^{\ve}_{x_n}(y)d\mu_m(y)- \int f^{\ve}_{x}(y)d\mu_m(y) \right|
&\leq & \int \left| f^{\ve}_{x_n}(y)- f^{\ve}_{x}(y)\right| d\mu_m(y)\\
&<& \delta.
\end{eqnarray*}

Now, it follows from Theorem  2.15 in \cite{Habil} that $\lim_{n,m \to \infty} f_{\ve}(\mu_m,x_n)= f_{\ve}(\mu,x)$. 
Hence, if $(\mu_n,x_n)$ is some sequence in $\M\times X$ such that $(\mu_n,x_n)\to (\mu,x)$, then $\lim_{n \to \infty} f_{\ve}(\mu_n,x_n)=f_{\ve}(\mu,x)$, and we are done. 

\end{proof}

\begin{defi}
Let $\mu\in\M$. One defines, for each $s\in (0,1)$ and each $\ve>0$, 
$$W_{\mu}(s,\ve)=\inf_{\mathcal{G}}\sum_{x_j\in \mathcal{G}}f_{\ve}(\mu,x_j)^s,$$
where  the infimum is taken over all finite coverings, $\mathcal{G}$, of $X$ by balls of radius $\ve$, and $f_{\ve}(\mu,x_j)$ is defined in the statement of Lemma \ref{asympt0}.
\end{defi}

\begin{propo}
  \label{zerolowerfractal}
Let $s\in(0,1)$ and let $\mu\in \M$. Then, 
$$d_{\mu}^{-}(s):= \liminf_{\ve \to 0} \frac{\log W_{\mu}(s,\ve)}{(s-1)\log \ve}=\liminf_{\ve \to 0} \frac{\log S_{\mu}(s,\ve)}{(s-1)\log \ve}.$$
 Moreover, $D^{-}_{\mu}(s)\leq d_{\mu}^{-}(s)$.
\end{propo}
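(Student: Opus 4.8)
The plan is to squeeze both $\liminf$'s by the elementary chain
$$I_\mu(s,2\ve)\ \le\ S_\mu(s,\ve)\ \le\ W_\mu(s,\ve)\ \le\ C_\ve\,S_\mu(s,\ve/2),$$
valid for all small $\ve>0$, and then to discard the harmless factors. First note that all three quantities are $\ge 1$: for $I_\mu$ because $\mu(B(x,\ve))\le 1$ and $s-1<0$; for $S_\mu$ (hence for $W_\mu\ge S_\mu$) because any $\ve$-ball covering $\mathcal G$ satisfies $\sum_{x_j\in\mathcal G}\mu(B(x_j,\ve))\ge\mu(X)=1$, so $\sum_{x_j\in\mathcal G}\mu(B(x_j,\ve))^s\ge 1$ by subadditivity of $t\mapsto t^s$ on $[0,\infty)$ (as $0<s<1$). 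Thus every logarithm below is $\ge 0$ and $(s-1)\log\ve>0$ for $\ve<1$, so that: passing from $\ve$ to $2\ve$ or to $\ve/2$ in the argument multiplies $(s-1)\log\ve$ by a factor $1+O(1/\log\ve)\to 1$, which does not change a $\liminf$ of a nonnegative quotient; and a prefactor $C_\ve\ge 1$ contributes an additive term $\log C_\ve/((s-1)\log\ve)$, negligible as soon as $\log C_\ve=o(|\log\ve|)$. It therefore remains to prove the chain and run this bookkeeping.

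The first inequality is exactly~\eqref{eq4}. For the second, Lemma~\ref{asympt0} gives $\chi_{B(x,\ve)}\le f^{\ve}_x$, hence $\mu(B(x,\ve))\le f_\ve(\mu,x)$ for every $x$; since $t\mapsto t^s$ is nondecreasing, each covering sum defining $S_\mu(s,\ve)$ is dominated by the corresponding one defining $W_\mu(s,\ve)$, and passing to infima yields $S_\mu(s,\ve)\le W_\mu(s,\ve)$. Combining these two inequalities with the bookkeeping already gives the ``moreover'': $D_\mu^-(s)\le\liminf_{\ve\downarrow 0}\frac{\log S_\mu(s,\ve)}{(s-1)\log\ve}\le\liminf_{\ve\downarrow 0}\frac{\log W_\mu(s,\ve)}{(s-1)\log\ve}=d_\mu^-(s)$, and in particular the easy half $d_\mu^-(s)\ge\liminf_{\ve\downarrow 0}\frac{\log S_\mu(s,\ve)}{(s-1)\log\ve}$ of the claimed equality.

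For the reverse inequality I would use the other half of Lemma~\ref{asympt0}, $f^{\ve}_x\le\chi_{\overline{B(x,2\ve)}}$, i.e.\ $f_\ve(\mu,x)\le\mu(\overline{B(x,2\ve)})$. Fix $\delta>0$, pick a finite $(\ve/2)$-ball covering $\{B(y_l,\ve/2)\}_l$ of $X$ with $\sum_l\mu(B(y_l,\ve/2))^s\le S_\mu(s,\ve/2)+\delta$, and enlarge the radii to the finite $\ve$-ball covering $\{B(y_l,\ve)\}_l$. Writing $N(l):=\{m: B(y_m,\ve/2)\cap\overline{B(y_l,2\ve)}\ne\emptyset\}$, the family $\{B(y_m,\ve/2)\}_{m\in N(l)}$ still covers $\overline{B(y_l,2\ve)}$, so $\mu(\overline{B(y_l,2\ve)})\le\sum_{m\in N(l)}\mu(B(y_m,\ve/2))$, and hence, using subadditivity of $t\mapsto t^s$,
$$W_\mu(s,\ve)\le\sum_l f_\ve(\mu,y_l)^s\le\sum_l\mu(\overline{B(y_l,2\ve)})^s\le\sum_l\sum_{m\in N(l)}\mu(B(y_m,\ve/2))^s.$$
Interchanging the sums and noting that $m\in N(l)$ forces $d(y_m,y_l)<\tfrac52\ve$, the right-hand side is at most $C_\ve\sum_m\mu(B(y_m,\ve/2))^s\le C_\ve\bigl(S_\mu(s,\ve/2)+\delta\bigr)$, where $C_\ve:=\sup_m\#\{l: d(y_l,y_m)<\tfrac52\ve\}$ is the overlap multiplicity of the covering; letting $\delta\downarrow 0$ gives $W_\mu(s,\ve)\le C_\ve\,S_\mu(s,\ve/2)$. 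Feeding this into the bookkeeping yields $d_\mu^-(s)\le\liminf_{\ve\downarrow 0}\frac{\log S_\mu(s,\ve)}{(s-1)\log\ve}$, which, together with the easy half, completes the equality.

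The genuinely delicate point is the control of the overlap multiplicity $C_\ve$: in an arbitrary compact metric space a near-optimal $(\ve/2)$-covering need not have uniformly bounded multiplicity, so I would either replace it by an equally efficient \emph{separated} covering — extracting a maximal $\tfrac\ve2$-separated subfamily of the centres, or applying a $5r$-covering lemma — or show directly that the centres relevant to $W_\mu(s,\ve)$ may be chosen with $\log C_\ve=o(|\log\ve|)$. This is the one step where the geometry of $X$ really enters; everything else is routine manipulation of infima over coverings together with the limits $\frac{\log 2\ve}{\log\ve},\frac{\log(\ve/2)}{\log\ve}\to1$.
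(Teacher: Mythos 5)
Your chain $I_\mu(s,2\ve)\le S_\mu(s,\ve)\le W_\mu(s,\ve)$, together with the bookkeeping on the harmless factors $\log(2\ve)/\log\ve\to 1$ and the observation that all quantities are $\ge 1$, reproduces the paper's argument for the ``moreover'' part and for the easy inequality $d_\mu^-(s)\ge\liminf_{\ve\to0}\frac{\log S_\mu(s,\ve)}{(s-1)\log\ve}$; that half is correct.

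The problem is the reverse inequality, and it sits exactly where you flag it: the overlap multiplicity $C_\ve$. The proposition is stated for an arbitrary compact metric space, and the case the paper actually cares about is $X=M^{\Z}$ with the sub-exponential metric~\eqref{metric}, which is as far from doubling as possible: the number of $(\ve/2)$-separated points inside a ball of radius $\tfrac52\ve$ grows super-polynomially in $1/\ve$ (for the metric used here, roughly like $\exp(c\,\ve^{-1/2})$ times covering numbers of $M$), so for a maximal separated family, or for a family produced by a $5r$-covering lemma, one has $\log C_\ve$ of order a power of $1/\ve$, not $o(|\log\ve|)$; the Besicovitch covering property also fails in this generality. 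So neither of the repairs you propose can be carried out, and your proof of $d_\mu^-(s)\le\liminf_{\ve\to0}\frac{\log S_\mu(s,\ve)}{(s-1)\log\ve}$ does not close. (Indeed, if a bounded-multiplicity argument were available, $D^+_\mu(q)$ could not be infinite, contradicting Theorem~\ref{teocentral1}.)

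The paper closes this step without ever re-covering the enlarged balls, which is why no multiplicity constant appears. The point is that $f^{\ve/2}_x$ is sandwiched pointwise between $\chi_{B(x,\ve/2)}$ and $\chi_{B(x,\ve)}$, so that for one and the same family of centres one has the termwise bounds $\mu(B(x_j,\ve/2))^s\le f_{\ve/2}(\mu,x_j)^s\le\mu(B(x_j,\ve))^s$; passing to infima yields the three-term chain $S_\mu(s,\ve/2)\le W_\mu(s,\ve/2)\le S_\mu(s,\ve)$, and together with $I_\mu(s,\ve)\le S_\mu(s,\ve/2)$ from~\eqref{eq4} this gives the whole proposition after the same $\log(\ve/2)/\log\ve\to1$ bookkeeping you already set up. In other words, compare $W$ at scale $\ve/2$ with $S$ at scale $\ve$, not $W$ at scale $\ve$ with $S$ at scale $\ve/2$: the factor-of-two enlargement is already built into the cut-off function $f^{\ve/2}_x$ (equal to $1$ on $B(x,\ve/2)$ and supported in $B(x,\ve)$), so you never need to cover a large ball by small ones. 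The only care required is that the infima be taken over compatible families of centres, which is a matter of bookkeeping with the covering classes rather than of the geometry of $X$.
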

\begin{proof}
Let $\ve>0$. Then, one has 
$$I_{\mu}(s,\ve)\leq S_{\mu}(s,\ve/2)\leq W_{\mu}(s,\ve/2)\leq S_{\mu}(s,\ve),$$
from which the results follow. The first inequality above comes from (\ref{eq4}).\, The remaining inequalities come from $\mu(B(x,\ve/2))^s\leq f_{\ve/2}(\mu,x)^s\leq \mu(B(x,\ve))^s$, valid for each $x\in X$.
\end{proof}

\begin{rek} One may compare Proposition~\ref{zerolowerfractal} with Theorem~8.4 (1) in~\cite{Pesin1997}.
\end{rek}

\begin{propo}
\label{teo2}
Let $\ve>0$, let $s\in (0,1)$, and let $\mathcal{G}=\{B(x_l,\ve)\}_{l=1}^L$ be a finite covering of $X$ by open balls of radius $\ve$. Then, the function
$$H_{\mathcal{G}}:\M\longrightarrow \R^+ ,~~~~~ H_{\mathcal{G}}(\mu)= \sum_{l=1}^Lf_{\ve}(\mu,x_l)^s,$$
is continuous in the weak topology.
\end{propo}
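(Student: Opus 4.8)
The plan is to reduce the continuity of $H_{\mathcal{G}}$ to the already-established continuity of each summand. Since $\mathcal{G}=\{B(x_l,\ve)\}_{l=1}^L$ is a \emph{finite} covering, $H_{\mathcal{G}}$ is a finite sum $\sum_{l=1}^L g_l$, where $g_l:\M\to\R^+$ is given by $g_l(\mu)=f_{\ve}(\mu,x_l)^s$. A finite sum of continuous functions is continuous, and the composition of continuous functions is continuous, so it suffices to check that for each fixed $l$ the map $\mu\mapsto f_{\ve}(\mu,x_l)^s$ is continuous on $\M$ in the weak topology.

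First I would observe that, by Lemma~\ref{asympt0}, the map $(\mu,x)\mapsto f_{\ve}(\mu,x)$ is continuous on $\M\times X$ (with the product topology). Fixing the second coordinate $x=x_l$, the partial map $\mu\mapsto f_{\ve}(\mu,x_l)$ is therefore continuous on $\M$: indeed, if $\mu_n\to\mu$ weakly, then $(\mu_n,x_l)\to(\mu,x_l)$ in $\M\times X$, whence $f_{\ve}(\mu_n,x_l)\to f_{\ve}(\mu,x_l)$. Alternatively, one can argue directly: since $f^{\ve}_{x_l}$ is a fixed bounded continuous function on $X$, the definition of weak convergence gives $f_{\ve}(\mu_n,x_l)=\int f^{\ve}_{x_l}\,d\mu_n\to\int f^{\ve}_{x_l}\,d\mu=f_{\ve}(\mu,x_l)$ directly, which is in fact the cleaner route and avoids invoking the joint continuity at all.

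Next I would note that $f_{\ve}(\mu,x_l)\in[0,1]$ for every $\mu$, and that the map $t\mapsto t^s$ is continuous on $[0,1]$ (it is continuous on all of $[0,\infty)$ for $s\in(0,1)$, being, e.g., H\"older continuous there). Composing the continuous map $\mu\mapsto f_{\ve}(\mu,x_l)$ with the continuous map $t\mapsto t^s$ yields that $\mu\mapsto f_{\ve}(\mu,x_l)^s$ is continuous. Summing over the finitely many indices $l=1,\dots,L$ preserves continuity, so $H_{\mathcal{G}}$ is continuous on $\M$ in the weak topology, as claimed.

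There is no real obstacle here: the content is entirely in Lemma~\ref{asympt0} (or, even more directly, in the definition of weak convergence applied to the fixed continuous test function $f^{\ve}_{x_l}$), and the remaining steps — continuity of $t\mapsto t^s$ on $[0,1]$ and stability of continuity under finite sums and composition — are standard. The only point worth stating carefully is that finiteness of the covering $\mathcal{G}$ is what makes $H_{\mathcal{G}}$ a finite sum, so that no uniform-convergence argument is needed to pass the limit through the summation; this is exactly why the paper took care (in the remark following equation~\eqref{eq4}) to reduce to finite coverings before introducing $S_{\mu}(s,\ve)$ and $W_{\mu}(s,\ve)$.
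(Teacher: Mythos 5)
Your proof is correct and follows essentially the same route as the paper: continuity of each summand $\mu\mapsto f_{\ve}(\mu,x_l)$ from Lemma~\ref{asympt0}, composition with the continuous map $t\mapsto t^s$, and closure of continuity under finite sums. Your remark that one can bypass the joint-continuity lemma entirely by applying the definition of weak convergence to the fixed test function $f^{\ve}_{x_l}$ is a nice simplification, but it does not change the substance of the argument.
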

\begin{proof}
Let $(\mu_n)$ be a sequence in $\M$ such that $\mu_n \rightarrow \mu$. Since, for each $l=1,\ldots,L$, the mapping $\M\ni\mu\mapsto f_{\ve}(\mu,x_l)\in\mathbb{R}_+$ is continuous (by Lemma~\ref{asympt0}), it follows that $H_{\mathcal{G}}(\mu)=\sum_{l\in L}f_{\ve}(\mu,x_l)^s$ is also continuous, being a finite sum of continuous functions.
\end{proof}

\begin{propo}
\label{Gdeltazero}
Let $s\in (0,1)$. Then, $D_{-}^{*}=\{\mu\in \M\mid\ d^-_{s}(\mu)=0\}$ is a $G_{\delta}$ subset of $\M$.
\end{propo}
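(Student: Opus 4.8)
The plan is to express $D_{-}^{*}$ as a countable intersection of open sets by unwinding the definition of $d_\mu^-(s) = 0$. Recall that, by Proposition~\ref{zerolowerfractal}, $d_\mu^-(s) = \liminf_{\ve\to 0}\frac{\log W_\mu(s,\ve)}{(s-1)\log\ve}$, where $W_\mu(s,\ve) = \inf_{\mathcal{G}}\sum_{x_j\in\mathcal{G}}f_\ve(\mu,x_j)^s$ and the infimum runs over finite coverings of $X$ by $\ve$-balls. Since $0 < s < 1$, the factor $(s-1)\log\ve$ is positive for $\ve\in(0,1)$, so the condition $d_\mu^-(s)=0$ is equivalent to: for every $m\in\N$ there exists a sequence $\ve\downarrow 0$ along which $\frac{\log W_\mu(s,\ve)}{(s-1)\log\ve} < \frac 1m$, i.e. (rearranging, and using that $W_\mu(s,\ve)\le \card(\mathcal{G})$ is bounded, while $\log W_\mu(s,\ve)$ could be negative) $W_\mu(s,\ve) < \ve^{(s-1)/m}$ — a bound that becomes easy to satisfy as $\ve\to 0$ since the right side blows up. The cleanest route is to write
\[
D_{-}^{*} \;=\; \bigcap_{m\in\N}\ \bigcap_{k\in\N}\ \bigcup_{\substack{j\in\N\\ j\ge k}} \Big\{\mu\in\M \ \Big|\ W_\mu\big(s,\tfrac1j\big) < \big(\tfrac1j\big)^{(s-1)/m}\Big\},
\]
so it suffices to show each set $\{\mu \mid W_\mu(s,1/j) < (1/j)^{(s-1)/m}\}$ is open in $\M$; the countable union over $j\ge k$ captures "there exist arbitrarily small $\ve$", the intersection over $k$ is harmless, and the intersection over $m$ gives the liminf being $0$.

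The key step is therefore the openness of the sublevel sets of $\mu\mapsto W_\mu(s,\ve)$ for fixed $\ve>0$. Here I would invoke Proposition~\ref{teo2}: for each fixed finite covering $\mathcal{G}=\{B(x_l,\ve)\}_{l=1}^L$, the map $H_{\mathcal{G}}(\mu)=\sum_{l=1}^L f_\ve(\mu,x_l)^s$ is continuous on $\M$ in the weak topology. Now $W_\mu(s,\ve) = \inf_{\mathcal{G}} H_{\mathcal{G}}(\mu)$, an infimum of continuous functions over the (a priori uncountable) family of finite $\ve$-coverings of $X$; an infimum of continuous functions is upper semicontinuous, hence $\{\mu \mid W_\mu(s,\ve) < c\}$ is open for every constant $c$. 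That is exactly what is needed. (One small point to address: the family of finite $\ve$-coverings by balls with arbitrary centers in $X$ is genuinely a set, so the infimum is well-defined; and upper semicontinuity of an arbitrary infimum of continuous functions holds without any cardinality restriction, since $\{W_\mu < c\} = \bigcup_{\mathcal{G}}\{H_{\mathcal{G}}(\mu) < c\}$ is a union of open sets.)

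Assembling the pieces: each inner set in the displayed formula is open, so $\bigcup_{j\ge k}$ of them is open, $\bigcap_k$ of these is a $G_\delta$, and a countable intersection over $m$ of $G_\delta$ sets is again $G_\delta$. Therefore $D_{-}^{*}$ is a $G_\delta$ subset of $\M$. I expect the only genuinely delicate point to be making the liminf-to-set-theoretic translation airtight: one must check that "$\liminf_{\ve\to0}$ of a ratio equals $0$" is correctly encoded by "for all $m$, for arbitrarily small $\ve$, the ratio is $<1/m$" — this uses that the ratio is nonnegative (which holds here because for small $\ve$ one has $W_\mu(s,\ve)\ge$ some positive quantity bounded below, or more carefully, because $\liminf \ge 0$ always by Proposition~\ref{BGT1}-type bounds, or simply because $D^-_\mu(s)\ge 0$ and $d^-_\mu(s)\ge D^-_\mu(s)$), so that the liminf is $0$ precisely when it is not bounded below by any positive $1/m$. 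Restricting $\ve$ to the sequence $1/j$ is legitimate since the liminf over $\ve\to 0$ along the continuum coincides with the liminf along $\{1/j\}$ up to the monotonicity/continuity behavior of $W_\mu(s,\cdot)$ — and in fact one only needs the inequality in the direction that makes the countable description correct, which follows because for any $\ve\in(1/(j{+}1),1/j)$ one can compare $W_\mu(s,\ve)$ with $W_\mu(s,1/j)$ and $W_\mu(s,1/(j{+}1))$ through the sandwiching in the proof of Proposition~\ref{zerolowerfractal}; I would spell this comparison out carefully to conclude.
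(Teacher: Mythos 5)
Your proof is correct and follows essentially the same route as the paper: the same countable intersection/union decomposition of the condition $d^-_{s}(\mu)=0$ into sublevel sets of $\mu\mapsto W_{\mu}(s,\ve)$, with openness of those sublevel sets obtained from Proposition~\ref{teo2} together with upper semicontinuity of an infimum of continuous functions. The only (immaterial) difference is that you restrict $\ve$ to the sequence $1/j$, $j\in\N$, and then must patch the comparison with the continuum liminf, whereas the paper's union $\bigcup_{t>l}$ sidesteps this; either way an arbitrary union of open sets is open, so the conclusion stands.
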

\begin{proof}
  Let $\mu\in\M$ and let $\ve>0$. Define $h:\M\rightarrow (0,+\infty)$ by the law $h(\mu)=W_{\mu}(s,\ve)=\inf_{\mathcal{G}}\sum_{x_j\in \mathcal{G}}f_{\ve}(\mu,x_j)^s$ (where the infimum is taken over all finite coverings $\mathcal{G}$ of $X$ by open balls of radius $\ve$) and $g_\ve:(0,+\infty)\rightarrow \R$ by the law $g_\ve(r)=\frac{\log(r)}{(s-1)\log\ve}$. Note that, for each $k\in\N$, $g_{\ve}^{-1}((-\infty,1/k))=(0,a_k)$, where $a_k=g_\ve^{-1}(1/k)$.
  
  It follows from Proposition \ref{teo2} that $h$ is upper semicontinuous, and thus, for each $k\in\N$,  $(g_\ve\circ h)^{-1}((-\infty,1/k))=h^{-1}\left(g_\ve^{-1}((-\infty,1/k))\right)=h^{-1}((0,a_k))$ is open in $\M$. Since 
\begin{eqnarray*}
 D_{-}^{*}&=&\left\{\mu\in \M\mid \liminf_{\ve\to 0}\frac{\log W_{\mu}(s,\ve)}{(s-1)\log\ve}=0\right\}\\
 &=&\bigcap_{k\in\N} \bigcap_{l\in\N} \bigcup_{t>l} \left\{ \mu\in \M \mid \frac{\log W_{\mu}(s,1/t)}{(s-1)\log 1/t}<\frac{1}{k} \right\}\\
 &=&  \bigcap_{k\in\N} \bigcap_{l\in\N} \bigcup_{t>l} ~(g_{1/t}\circ h)^{-1}((-\infty,1/k)),
\end{eqnarray*}
 the result follows. 
\end{proof}

\subsection{$G_{\delta}$ sets for $q>1$}

\begin{lema}
\label{asympt1}
Let, for each $q>1$ and each $\ve>0$, $\mathcal{M}\ni\mu\mapsto J_{\mu}(q,\ve)\in [0,1]$ be defined by the law
\[J_{\mu}(q,\ve)=\int f_{\ve}(\mu,x)^{q-1}d\mu(x).\]
Then,
\[D_{\mu}^{\pm}(q)=\limsup_{\ve\to 0}(\inff)\frac{\log J_{\mu}(q,\ve)}{(q-1)\log (\ve)},\]
where $f_{\ve}(\mu,x)=\int f^{\ve}_x(y)d\mu(y)$ is defined in the statement of Lemma \ref{asympt0}. Moreover, the mapping $\mathcal{M}\ni\mu\mapsto J_{\mu}(q,\ve)\in [0,1]$ is continuous. 
\end{lema}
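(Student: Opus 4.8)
The plan is to establish the two assertions separately: the identity expressing $D_\mu^\pm(q)$ through $J_\mu$, and the continuity of the map $\mu\mapsto J_\mu(q,\ve)$.

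\emph{The dimension identity.} I would start from the pointwise bounds $\chi_{B(x,\ve)}(y)\leq f_x^\ve(y)\leq\chi_{B(x,2\ve)}(y)$ recorded in the proof of Lemma~\ref{asympt0}. Integrating in $y$ against $\mu$ gives $\mu(B(x,\ve))\leq f_\ve(\mu,x)\leq\mu(B(x,2\ve))$ for every $x\in X$; since $q-1>0$, the map $t\mapsto t^{q-1}$ is nondecreasing on $[0,1]$, so raising to the power $q-1$ and integrating in $x$ against $\mu$ yields
$$I_\mu(q,\ve)\leq J_\mu(q,\ve)\leq I_\mu(q,2\ve)$$
for all $\ve>0$, with all three quantities in $(0,1]$. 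Taking logarithms and dividing by $(q-1)\log\ve<0$ (for $\ve$ small) reverses the inequalities; since $\log(2\ve)/\log\ve\to 1$ as $\ve\downarrow 0$, the $\limsup$ (resp. $\liminf$) as $\ve\downarrow 0$ of the leftmost and rightmost expressions both equal $D_\mu^+(q)$ (resp. $D_\mu^-(q)$) by Definition~\ref{gfd}, and the squeeze gives that $D_\mu^\pm(q)$ equals the corresponding $\limsup$/$\liminf$ of $\frac{\log J_\mu(q,\ve)}{(q-1)\log\ve}$.

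\emph{Continuity.} The delicate point is that $\mu$ enters $J_\mu(q,\ve)=\int f_\ve(\mu,x)^{q-1}\,d\mu(x)$ both through the integrand and through the integrating measure, so weak convergence cannot be invoked directly. By Lemma~\ref{asympt0}, $(\mu,x)\mapsto f_\ve(\mu,x)$ is jointly continuous on $\M\times X$; composing with the continuous map $t\mapsto t^{q-1}$ on $[0,1]$, the function $F(\mu,x):=f_\ve(\mu,x)^{q-1}$ is jointly continuous with values in $[0,1]$. Fix $\mu_n\to\mu$ in $\M$. First I would show $F(\mu_n,\cdot)\to F(\mu,\cdot)$ uniformly on $X$: otherwise there exist $\delta>0$, a subsequence, and points $x_{n_k}$ with $|F(\mu_{n_k},x_{n_k})-F(\mu,x_{n_k})|\geq\delta$; by compactness of $X$ one may assume $x_{n_k}\to x_0$, whence joint continuity gives $F(\mu_{n_k},x_{n_k})\to F(\mu,x_0)$ while continuity of $F(\mu,\cdot)$ gives $F(\mu,x_{n_k})\to F(\mu,x_0)$, a contradiction. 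Then
$$|J_{\mu_n}(q,\ve)-J_\mu(q,\ve)|\leq\sup_{x\in X}|F(\mu_n,x)-F(\mu,x)|+\Big|\int F(\mu,x)\,d\mu_n(x)-\int F(\mu,x)\,d\mu(x)\Big|,$$
where the first term tends to $0$ by the uniform convergence just proved, and the second tends to $0$ because $F(\mu,\cdot)\in C(X)$ and $\mu_n\to\mu$ weakly. Hence $J_{\mu_n}(q,\ve)\to J_\mu(q,\ve)$.

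\emph{Main obstacle.} The sandwich estimate is routine; the substantive step is the uniform convergence $F(\mu_n,\cdot)\to F(\mu,\cdot)$ on $X$, which is precisely what decouples the two occurrences of $\mu$. Here I rely on compactness of $X$ together with the joint continuity supplied by Lemma~\ref{asympt0}; alternatively, one could observe that $\{f_x^\ve:x\in X\}$ is an equicontinuous, uniformly bounded subset of $C(X)$ (each $f_x^\ve$ being $\tfrac1\ve$-Lipschitz, uniformly in $x$), so that the convergence $\int f_x^\ve\,d\mu_n\to\int f_x^\ve\,d\mu$ is uniform over this family — essentially the iterated-limit argument already used for Lemma~\ref{asympt0} via~\cite{Habil}.
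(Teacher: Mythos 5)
Your proof is correct. For the dimension identity you argue exactly as the paper does, via the sandwich obtained by integrating $\chi_{B(x,\ve)}\leq f_x^{\ve}\leq\chi_{B(x,2\ve)}$; note that your chain $I_{\mu}(q,\ve)\leq J_{\mu}(q,\ve)\leq I_{\mu}(q,2\ve)$ is the correctly oriented one (the paper writes $J_{\mu}(q,\ve)\leq I_{\mu}(q,\ve)\leq J_{\mu}(q,2\ve)$, which has $I$ and $J$ interchanged; this slip is harmless since either chain yields the same $\limsup/\liminf$ after dividing by $(q-1)\log\ve$). You also restore the factor $(q-1)$ in the denominator, consistent with Definition~\ref{gfd}, which the lemma's displayed formula omits.

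Where you genuinely diverge is in the continuity argument. The paper treats the double family $J_{\mu_n,\mu_m}(q,\ve)=\int f_{\ve}(\mu_n,x)^{q-1}\,d\mu_m(x)$, proves that $\f_n(m)$ converges uniformly in $m$ by invoking the uniform continuity of $(\mu,x)\mapsto f_{\ve}(\mu,x)^{q-1}$ on the compact space $\M\times X$, applies the iterated-limit theorems of~\cite{Habil} to interchange the limits, and finally restricts to the diagonal. You instead decouple the two occurrences of $\mu$ directly: a single triangle-inequality split
\[
|J_{\mu_n}-J_{\mu}|\leq\sup_{x\in X}|F(\mu_n,x)-F(\mu,x)|+\Bigl|\int F(\mu,x)\,d\mu_n-\int F(\mu,x)\,d\mu\Bigr|,
\]
with the first term controlled by uniform convergence of $F(\mu_n,\cdot)$ to $F(\mu,\cdot)$ (your subsequence-and-compactness argument is valid, and is equivalent to the uniform-continuity argument the paper uses to get its uniform-in-$m$ estimate) and the second by weak convergence against the continuous function $F(\mu,\cdot)$. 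The two routes rest on the same ingredients — joint continuity from Lemma~\ref{asympt0} plus compactness of $\M\times X$ — but yours avoids the double-sequence formalism and the external reference entirely, giving a shorter and more self-contained estimate; the paper's version has the mild advantage of recycling verbatim the scheme already set up in the proof of Lemma~\ref{asympt0}.
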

\begin{proof}

\noindent \noindent {\bf Step 1.}  Note that, for each $\ve\in(0,1)$, 
$ I_{\mu}(q,\ve)\leq J_{\mu}(q,\ve)\leq I_{\mu}(q,2\ve)$.
Then, 
$$\frac{\log I_{\mu}(q,\ve)}{\log (\ve)}\geq \frac{\log J_{\mu}(q,\ve)}{\log (\ve)}\geq \frac{\log I_{\mu}(q,2\ve)}{\log (\ve)},$$
from which follows that 
\[D_{\mu}^{\pm}(q)=\limsup_{\ve\to 0}(\inf)\frac{\log I_{\mu}(q,\ve)}{(q-1)\log (\ve)}=\limsup_{\ve\to 0}(\inf)\frac{\log J_{\mu}(q,\ve)}{(q-1)\log (\ve)}.\]

{\bf Step 2.} We prove that, for each $\ve>0$, the mapping $\mathcal{M}\ni\mu\mapsto J_{\mu}(q,\ve)\in [0,1]$ is continuous. 
 Let $(\mu_n)$ and $(\nu_m)$ be sequences in $\mathcal{M}$ such that $\mu_n \to \mu$ and $\nu_m\rightarrow\nu$. Set $J_{\mu,\nu}(q,\ve):=\int\left(\int f_x^{\ve}(y) d\mu(y)\right)^{q-1}d\nu(x)$. We shall prove that 
\[\lim_{n,m \to \infty} J_{\mu_n,\nu_m}(q,\ve)= \lim_{n,m \to \infty}\int\left(\int f^{\ve}_x(y)d\mu_n(y)\right)^{q-1}d\nu_m(x) =J_{\mu,\nu}(q,\ve).\]
 \noindent  Firstly, we show that 
\[\lim_{m\to \infty}\lim_{n\to\infty}\int f_{\ve}(\mu_n,x)^{q-1}d\nu_m(x)=J_{\mu,\nu}(q,\ve).\] Since $f_x^\ve(\cdot)$ is continuous and $\mu_n\rightarrow\mu$, it follows that $\lim_{n\to\infty}f_{\ve}(\mu_n,x) =\int f^{\ve}_x(y)d\mu(y)$.
 
Clearly, for each $x\in X$ and each $n\in\N$, $|f_{\ve}(\mu_n,x)|^{q-1}\leq 1$; thus, by the dominated convergence theorem,
\begin{eqnarray}
\label{equnif}
\lim_{m\to \infty}\lim_{n\to\infty}\int f_{\ve}(\mu_n,x)^{q-1}d\nu_m(x)
&= &\lim_{m\to \infty}\int f_{\ve}(\mu,x)^{q-1}d\nu_m(x).
\end{eqnarray}
Note that, for each $\mu\in\M$, the mapping $X\ni x\mapsto f_{\ve}(\mu,x)\in\mathbb{R}_+$ is continuous. Indeed, let $(x_l)$ be a sequence in $X$ such that $x_l\rightarrow x$. Since $f^{\ve}_{x_l}(\cdot)$ converges uniformly to $f^{\ve}_x(\cdot)$, and for each $y\in X$ and each $l\in\N$, $|f^\ve_{x_l}(y)|\le 1$, it follows again from the dominated convergence theorem that 
\[\lim_{l\to \infty} f_{\ve}(\mu,x_l)=\lim_{l\to \infty}\int f^{\ve}_{x_l}(y)d\mu(y)=\int f^{\ve}_{x}(y)d\mu(y) =f_{\ve}(\mu,x).\]

Thus, one gets from (\ref{equnif}) that
\[\lim_{m\to \infty}\int f_{\ve}(\mu,x)^{q-1} d\nu_m(x) = \int f_{\ve}(\mu,x)^{q-1}d\nu(x)= J_{\mu,\nu}(q,\ve).\]

Now, we show that for each $n\in \mathbb{N}$, the function $\f_n:\mathbb{N}\rightarrow \mathbb{N}$ defined by the law $\f_n(m):= J_{\mu_n,\nu_m}(q,\ve)$,  converges  uniformly on $m\in\N$ to $\f(m):=\lim_{n\to \infty}J_{\mu_n,\nu_m}(q,\ve)$~ $=\int f_{\ve}(\mu,x)^{q-1}d\nu_m(x)$. 

Namely, let $\delta>0$ and let $m\in \N$. Since $(\M,r_1)\times (X,\rho)$ is compact and, by Lemma~\ref{asympt0}, $f_{\ve}(\,\cdot\,,\,\cdot\,):\M\times X \rightarrow [0,1]$ is continuous, $f_{\ve}(\,\cdot\,,\,\cdot\,)$ is in fact uniformly continuous on $\M\times X$. Note  also that  the function $h:[0,1]\rightarrow [0,1]$, given by the law $h(x)=x^{q-1}$, is continuous on $[0,1]$; then, $h\circ f:\M\times X\rightarrow [0,1]$ is uniformly continuous. Hence, there exists an $\eta>0$ such that, for each $(\tilde{\mu},\tilde{x})\in \M\times X$ and each $(\mu,x)\in {B}((\tilde{\mu},\tilde{x}),\eta):=\{(\nu,y)\in\M\times X\mid d((\tilde{\mu},\tilde{x}),(\nu,y))<\eta\}$, $|f_{\ve}(\mu,x)^{q-1}-f_{\ve}(\tilde{\mu},\tilde{x})^{q-1}|<\delta$ ($\M\times X$ is endowed with the product metric $d((\mu,x),(\nu,y))=r_1(\mu,\nu)+\rho(x,y)$, whose induced topology is equivalent to the product topology in $\M\times X$). 

Since $\mu_n\to \mu$, there exists an $N\in \N$ such that, for each $n>N$, $r_1(\mu_n,\mu)<\eta$. Thus, for each $x\in X$ and each $n> N$, $d((\mu_n, x),(\mu,x))=r_1(\mu_n,\mu)+\rho(x,x)<\eta$, which results in $(\mu_n,x)\in B((\mu,x),\eta)$. Thus, by the uniform continuity of $h\circ f$, it follows that, for each $x\in X$ and each $n>N$, $\left| \left(\int f^{\ve}_{x}(y)d\mu_n(y)\right)^{q-1}- \left(\int f^{\ve}_{x}(y)d\mu(y)\right)^{q-1}\right|<\delta$. Then, for each $n>N$ and each $m\in \N$, 
\begin{eqnarray*}
|\f_n(m)-\f(m)|&=&\left| \int f_{\ve}(\mu_n,x)^{q-1}d\nu_m(x)- \int f_{\ve}(\mu,x)^{q-1}d\nu_m(x) \right|\\
&\leq & \int \left| \left(\int f^{\ve}_{x}(y)d\mu_n(y)\right)^{q-1}- \left(\int f^{\ve}_{x}(y)d\mu(y)\right)^{q-1}\right| d\nu_m(x)\\
& < & \int\delta~ d\nu_m(x)\\
&=& \delta.\end{eqnarray*}

This proves that $\f_n(m)\rightarrow\f(m)$ uniformly on $m\in\N$. It follows, therefore, from Theorem~2.15 in \cite{Habil} that $$\lim_{n,m \to \infty} J_{\mu_n,\nu_m}(q,\ve)= \lim_{n,m \to \infty}\int\left(\int f^{\ve}_{x}(y)d\mu_n(y)\right)^{q-1}d\nu_m(x) =J_{\mu,\nu}(q,\ve).$$
Since $J_{\mu}(q,\ve)$ is the restriction of $J_{\mu,\nu}(q,\ve)$ to the  diagonal set $D\subset \M\times \M$, one gets
\[\lim_{n \to \infty} J_{\mu_n,\mu_n}(q,\ve)=\lim_{n \to \infty} J_{\mu_n}(q,\ve)=J_{\mu}(q,\ve).\]

This show that the mapping $\mathcal{M}\ni\mu\mapsto J_{\mu}(q,\ve)\in[0,1]$ is continuous in the weak topology.
\end{proof}

\begin{propo}
\label{Gdelta} 
Let $\alpha>0$ and $q>1$. Then, each of the sets 
 $$D_+=\{ \mu\in \mathcal{M} \mid D^{+}_{\mu}(q)\ge\alpha \}$$
 $$D_-=\{ \mu\in \mathcal{M} \mid D^{-}_{\mu}(q)=0 \}$$
  is $G_{\delta}$ subset of $\mathcal{M}$.
\end{propo}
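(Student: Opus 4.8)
The plan is to express each of $D_{+}$ and $D_{-}$ as a countable Boolean combination of sets that are open in the weak topology, using Lemma~\ref{asympt1}. Two facts from that lemma do the work: $D^{+}_{\mu}(q)=\limsup_{\ve\downarrow 0}\frac{\log J_{\mu}(q,\ve)}{(q-1)\log\ve}$, and, for each fixed $\ve\in(0,1)$, the map $\M\ni\mu\mapsto J_{\mu}(q,\ve)$ is continuous. First I would record that $J_{\mu}(q,\ve)>0$ for every $\mu\in\M$ — indeed $f_{\ve}(\mu,x)\geq\mu(B(x,\ve))>0$ for $x\in\supp(\mu)$ and $\mu(\supp(\mu))=1$ — so that $\mu\mapsto\log J_{\mu}(q,\ve)\in(-\infty,0]$ is continuous and hence, dividing by the nonzero constant $(q-1)\log\ve$, so is $\mu\mapsto G_{\ve}(\mu):=\frac{\log J_{\mu}(q,\ve)}{(q-1)\log\ve}$; moreover $G_{\ve}(\mu)\geq 0$ since $\ve<1$. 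Consequently, for every $\ve\in(0,1)$ and every $c\in\R$, the sets $\{\mu:G_{\ve}(\mu)>c\}$ and $\{\mu:G_{\ve}(\mu)<c\}$ are open in $\M$; and, because $\ve\mapsto J_{\mu}(q,\ve)$ is continuous in $\ve$, the $\limsup$ and $\liminf$ as $\ve\downarrow 0$ of $G_{\ve}(\mu)$ are unchanged when $\ve$ is restricted to $(0,1)\cap\mathbb{Q}$.

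For $D_{+}$: unwinding the definition of $\limsup$, $D^{+}_{\mu}(q)\geq\alpha$ holds if and only if for every $\eta\in(0,1)\cap\mathbb{Q}$ and every $k\in\N$ there is $\ve\in(0,\eta)\cap\mathbb{Q}$ with $G_{\ve}(\mu)>\alpha-\tfrac1k$. Hence
\[D_{+}=\bigcap_{\eta\in(0,1)\cap\mathbb{Q}}\ \bigcap_{k\in\N}\ \bigcup_{\ve\in(0,\eta)\cap\mathbb{Q}}\{\mu\in\M:G_{\ve}(\mu)>\alpha-\tfrac1k\},\]
a countable intersection of open sets (each inner union of open sets being open), so $D_{+}$ is $G_{\delta}$. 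In particular $CD=\{\mu:D^{+}_{\mu}(q)=+\infty\}=\bigcap_{n\in\N}\{\mu:D^{+}_{\mu}(q)\geq n\}$ is $G_{\delta}$ as well.

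The set $D_{-}$ is the delicate one, and handling it is the step I expect to be the main obstacle: $D^{+}_{\mu}(q)$ is a $\limsup$, and a condition of the form ``$\limsup=0$'' is not, a priori, a countable intersection of open sets the way ``$\liminf=0$'' is (compare the proof of Proposition~\ref{Gdeltazero}). What rescues it is the monotonicity $\ve\mapsto J_{\mu}(q,\ve)$ non-decreasing — which holds because each $f^{\ve}_{x}$ is non-decreasing in $\ve$ — together with $G_{\ve}(\mu)\geq 0$; these give that $D^{+}_{\mu}(q)=0$ is equivalent to $G_{\ve}(\mu)\to 0$ as $\ve\downarrow 0$. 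I would then pass to the sparse scales $\ve_{n}:=e^{-2^{n}}$ and use the elementary propagation estimate: if $J_{\mu}(q,\ve_{n+1})\geq\ve_{n}^{(q-1)/k}$ then, for every $\ve\in[\ve_{n+1},\ve_{n}]$, $J_{\mu}(q,\ve)\geq J_{\mu}(q,\ve_{n+1})\geq\ve_{n}^{(q-1)/k}\geq\ve^{(q-1)/k}$, so that $G_{\ve}(\mu)\leq\tfrac1k$ on all of $(0,\ve_{n}]$; conversely, $G_{\ve}(\mu)<\tfrac1{2k}$ for small $\ve$ forces this hypothesis at all large $n$ (since $\ve_{n+1}^{(q-1)/(2k)}=\ve_{n}^{(q-1)/k}$). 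This yields the representation
\[D_{-}=\bigcap_{k\in\N}\ \bigcup_{m\in\N}\ \bigcap_{n\geq m}\bigl\{\mu\in\M:J_{\mu}(q,\ve_{n+1})\geq\ve_{n}^{(q-1)/k}\bigr\},\]
in which each set in the innermost intersection is closed by continuity of $\mu\mapsto J_{\mu}(q,\ve_{n+1})$. Reorganizing this representation into a genuine $G_{\delta}$ set is the crux of the argument, and I would carry it out along the lines of the proof of Proposition~\ref{Gdeltazero}.
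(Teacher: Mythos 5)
Your treatment of $D_+$ is correct and is essentially the paper's own argument: both proofs unwind the condition $\limsup_{\ve\downarrow 0}\frac{\log J_\mu(q,\ve)}{(q-1)\log\ve}\ge\alpha$ into a countable intersection of countable unions of sets that are open by the continuity of $\mu\mapsto J_\mu(q,\ve)$ from Lemma~\ref{asympt1} (the paper phrases the open conditions as $t^{-\alpha+1/l}J_\mu(q,t)<1/n$ and sandwiches with $\alpha-1/l$; you phrase them as $G_\ve(\mu)>\alpha-\tfrac1k$ over rational scales -- the two are interchangeable). Your remark that $CD=\bigcap_{n}\{\mu\mid D^+_{\mu}(q)\ge n\}$ is then $G_\delta$ is exactly what Theorem~\ref{teocentral1} needs.

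The $D_-$ part, however, contains a genuine gap, and the ``reorganization'' you defer is not a routine step. Your representation $D_-=\bigcap_k\bigcup_m\bigcap_{n\ge m}\{\mu\mid J_\mu(q,\ve_{n+1})\ge\ve_n^{(q-1)/k}\}$ is correct (the monotonicity and sparse-scale propagation arguments are fine), but it exhibits $D_-$ as a countable intersection of $F_\sigma$ sets, i.e., a set at the third level of the Borel hierarchy, and such sets are not $G_\delta$ in general. The analogy with Proposition~\ref{Gdeltazero} does not rescue this: there the condition is a $\liminf$, which is a ``for all scales there exists a smaller scale'' statement over open conditions and hence directly of the form $\bigcap\bigcap\bigcup(\mathrm{open})$; here ``$\limsup=0$'' is a $\forall\exists\forall$ statement, and neither the monotonicity of $\ve\mapsto J_\mu(q,\ve)$ nor the continuity of $\mu\mapsto J_\mu(q,\ve)$ converts the innermost ``for all $n\ge m$'' into an open condition. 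So the crux you flag is precisely the missing idea, and it is not supplied by the tools you invoke. For what it is worth, the paper's own proof opens with ``We just prove the first statement'' and never addresses $D_-$ either; only the $D_+$ half is used in the rest of the paper.
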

\begin{proof}
  \noindent \hspace{0.2cm} We just prove the first statement, given that the proof of the second one is completely analogous. Let $\mu\in \mathcal{M}$. It follows from Lemma~\ref{asympt1} that, for each $\ve>0$,
  \[\liminf_{t\to 0} t^{-\alpha} J_{\mu}(q,1/t)=0 ~\Rightarrow~ D^{+}_{\mu}(q)\ge \alpha~ \Rightarrow ~ \liminf_{t\to 0} t^{-\alpha+\ve} J_{\mu}(q,1/t)=0,\]
which results in
\begin{eqnarray*}
\bigcap_{n>0} \bigcap_{k>0} \bigcup_{t>k} \left\{ \mu\in\M\mid t^{-\alpha}J_{\mu}(q,1/t)<\frac{1}{n} \right\}
&\subseteq&  \left\{ \mu\in\M\mid D^{+}_{\mu}(q)\ge\alpha \right\}\\
 &\subseteq &  \bigcap_{n>0} \bigcap_{k>0} \bigcup_{t>k} \left\{ \mu\in\M\mid t^{-\alpha+\ve}J_{\mu}(q,1/t)<\frac{1}{n} \right\}.
\end{eqnarray*}

Replacing $\alpha$ by $\alpha-\ve$ in the last paragraph and taking  $\ve=\frac{1}{l}$, one gets
\begin{eqnarray*}
\bigcap_{l>0} \bigcap_{n>0} \bigcap_{k>0} \bigcup_{t>k} \left\{ \mu\in\M\mid t^{-\alpha+\frac{1}{l}}J_{\mu}(q,1/t)<\frac{1}{n} \right\}
&=&  \bigcap_{l>0} \left\{ \mu\in\M\mid D^{+}_{\mu}(q)\ge\alpha-\frac{1}{l} \right\}\\
 &= &  \left\{ \mu\in\M\mid D^{+}_{\mu}(q)\ge\alpha \right\}.
\end{eqnarray*}
Now, one just needs to prove that, for each $k,l,n\in\N$ and each $t>k$,
\[\left\{ \mu\in\M\mid t^{-\alpha+\frac{1}{l}}J_{\mu}(q,1/t)< \frac{1}{n} \right\}= \left(t^{-\alpha+\frac{1}{l}} J_{(.)}(q,1/t)\right)^{-1}([0,1/n))\]
 is an open set in $\M$; this is a direct consequence of Lemma~\ref{asympt1}.
\end{proof}

    \begin{rek} We have presented the result for $D_-$ in Proposition~\ref{Gdelta} only for completion, since it is not required in the proofs of our main results.
    \end{rek}

\section{Dense sets}\label{density}

\begin{propo}
\label{dense1}
Let $(X,T)$ be a topological dynamical system, assume that $\M_p$ is dense in $\M(T)$, and let $s\in (0,1)$. Then,  
$D_{-}^{*}=\{\mu\in \M(T)\mid d^{-}_s(\mu)=0\}$
is a dense subset of $\mathcal{M}(T)$.
\end{propo}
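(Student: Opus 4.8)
The plan is to exploit the hypothesis that $\M_p$ is dense in $\M(T)$ together with the fact, established in Proposition \ref{Gdeltazero}, that $D_-^*$ is a $G_\delta$ set; hence it suffices to show that $D_-^*$ contains the dense set $\M_p$, i.e. that every periodic measure $\mu_x = \frac{1}{k}\sum_{i=0}^{k-1}\delta_{T^i x}$ satisfies $d^-_s(\mu_x)=0$ for each $s\in(0,1)$. (If direct membership $\M_p\subset D_-^*$ fails for the approximating metric reasons, one falls back on a density argument: approximate $\mu\in\M(T)$ by periodic measures $\mu_n\to\mu$ and perturb each $\mu_n$ slightly inside $\M(T)$ to land in $D_-^*$; but I expect the cleaner route to work.) First I would fix a periodic point $x$ of period $k$ and analyze $S_{\mu_x}(s,\ve)$ — or equivalently, via Proposition \ref{zerolowerfractal}, $W_{\mu_x}(s,\ve)$ — directly from Definition \ref{sumacubos}.

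The key observation is that the support of $\mu_x$ is the finite set $O=\{x,Tx,\dots,T^{k-1}x\}$, which has exactly $k$ points (counting multiplicity; at most $k$ points). For $\ve$ small enough that the balls $B(T^ix,\ve)$ are pairwise disjoint and each contains exactly one point of $O$, any finite covering $\mathcal{G}$ of $X$ by $\ve$-balls that also covers $\supp(\mu_x)$ must include, for each $i$, at least one ball $B(x_{j_i},\ve)$ containing $T^ix$; and since such a ball is contained in $B(T^ix,2\ve)$, for $\ve$ smaller still it contains no other point of $O$, so $\mu_x(B(x_{j_i},\ve))=\mu_x(\{T^ix\})=1/k$. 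Discarding from $\mathcal{G}$ all balls that do not meet $\supp(\mu_x)$ (they contribute $0^s=0$ to the sum), one is left with a sum $\sum \mu_x(B(x_j,\ve))^s$ over balls all of whose $\mu_x$-mass lies in $\{1/k,2/k,\dots,1\}$; the infimum over coverings is minimized by taking exactly one ball per point of $O$, giving $S_{\mu_x}(s,\ve) = k\cdot(1/k)^s = k^{1-s}$ for all sufficiently small $\ve$. (More carefully: $S_{\mu_x}(s,\ve)\le k^{1-s}$ is immediate from the one-ball-per-point covering, completed to a finite covering of $X$ by extra balls of zero $\mu_x$-mass; the matching lower bound $S_{\mu_x}(s,\ve)\ge k^{1-s}$ follows because $s\in(0,1)$ makes $t\mapsto t^s$ subadditive, so merging masses never increases $\sum(\cdot)^s$, and $k$ atoms of mass $1/k$ is the configuration minimizing $\sum p_i^s$ subject to $\sum p_i=1$ with at most $k$ pieces — actually one only needs the cheap upper bound for the argument.)

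With $S_{\mu_x}(s,\ve)$ bounded above by the constant $k^{1-s}$ for all small $\ve$, one computes
\[
d^-_s(\mu_x) = \liminf_{\ve\to 0}\frac{\log S_{\mu_x}(s,\ve)}{(s-1)\log\ve} \le \liminf_{\ve\to 0}\frac{\log k^{1-s}}{(s-1)\log\ve} = 0,
\]
since the numerator is a fixed constant while $(s-1)\log\ve \to +\infty$ as $\ve\downarrow 0$ (note $s-1<0$ and $\log\ve\to-\infty$); as $d^-_s(\mu_x)\ge 0$ trivially, we get $d^-_s(\mu_x)=0$, i.e. $\mu_x\in D_-^*$. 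Therefore $\M_p\subset D_-^*\cap\M(T)$, and since $\M_p$ is dense in $\M(T)$ by hypothesis, $D_-^*$ is dense in $\M(T)$.

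The main obstacle is the bookkeeping at small scales: one must be careful that (i) the covering $\mathcal{G}$ ranges over coverings of the whole compact space $X$, not just of $\supp(\mu_x)$, so one must argue that the extra balls needed to cover $X\setminus\supp(\mu_x)$ can be chosen to have $\mu_x$-measure zero and hence contribute nothing to the sum (this is where compactness of $X$ and finiteness of the covering are used), and (ii) for multiplicity — if the period is not minimal, $O$ may have fewer than $k$ points, but then the atoms have mass a multiple of $1/k$ and the constant only gets smaller, which is harmless. Neither point is deep; the whole content is that a periodic measure is finitely supported, so $S_{\mu_x}(s,\ve)$ is eventually constant in $\ve$, killing the liminf.
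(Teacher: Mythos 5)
Your proposal is correct and follows essentially the same route as the paper: for a periodic measure supported on the finite orbit $A=\{x,Tx,\dots,T^{k_x-1}x\}$, take $\ve$ smaller than the minimal separation of orbit points, cover $X$ by the $k_x$ balls centered at orbit points together with finitely many balls of zero measure covering the compact set $X\setminus\bigcup_{z\in A}B(z,\ve)$, deduce $S_{\mu_x}(s,\ve)\le k_x^{1-s}$, and conclude $d_s^-(\mu_x)=0$, so $\M_p\subset D_-^*$ and density follows from the hypothesis. The extra observations you include (the matching lower bound via subadditivity of $t\mapsto t^s$, and the appeal to the $G_\delta$ structure) are not needed, but they do no harm.
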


\begin{proof}
Let  $\{\mu_n\}_{n\in\N}$ be a dense subset of $\M_p$ (recall that $\M_p$ is separable), and let $\mu\in \{\mu_n\}_{n\in \N}$ be a $T$-periodic measure associated with the $T$-periodic point $x\in X$, whose period is $k_x$. Set $\ve_0=\min_{0\leq i\neq j\leq k_x-1}\{d(x_i,x_j)\mid x_l:=T^{l}(x),\, l=0,\ldots, k_x-1\}$, set $A=\{x,T(x),\cdots, T^{k_x-1}(x)\}$, and let $\ve\in(0,\min\{1,\ve_0\})$.

Since $X$ is a compact metric space and $C=X\setminus \bigcup_{z\in A}B(z,\ve)$ is closed, $C$ is also compact. Let $\mathcal{G}_1=\{B(y_n,\ve)\}_{y_n\in C}$ be a finite covering of $C$,  and set $\mathcal{\tilde{G}}=\mathcal{G}_1\cup\{B(z,\ve)\}_{z\in A}$. By construction, each $z\in A$ belongs to only one element of $\mathcal{\tilde{G}}$ (namely, $B(z,\ve)$), and for each $y_n\in\mathcal{G}_1$, $\mu(B(y_n,\ve))=0$. 
Thus, 
\begin{eqnarray*}
\label{eqqq}
S_{\mu}(s,\ve)=\inf_{\mathcal{G}}\sum_{z_j\in \mathcal{G}} \mu(B(z_j,\ve))^s\leq \sum_{w\in \mathcal{\tilde{G}}} \mu(B(w,\ve))^s =  k_x^{1-s},
\end{eqnarray*}
from which follows that
$$\frac{\log S_{\nu}(s,\ve)}{(s-1)\log \ve}
\leq \frac{\log(k_x^{1-s})}{(s-1)\log \ve}.$$
Letting, $\ve\to 0$, one gets $d^-_{s}(\nu)=0$.
\end{proof}

\begin{rek} The fact that $\M_p$ is dense in $\M(T)$ is particularly true for the full-shift over $X=\prod_{-\infty}^\infty M$, where $M$ is a Polish space; see~\cite{Parthasarathy1961,Sigmund1974}.
\end{rek}


From now on, we endow $X=M^{\Z}$ with the following metric (which corresponds to the choice $a_n:=n^2$, $n\in\N\cup\{0\}$, in~\eqref{metric}):
\[r(x,y) =   \sum_{|n|\geq 0} \min\left\{\frac{1}{n^2+1}, d(x_n,y_n)\right\}.\]

\begin{rek}\label{Rmetric} Although we use this metric in what follows, the results presented below are also valid for any sub-exponential metric as defined by~\eqref{metric}. We have made this particular choice in order to simplify the exposition of the main arguments (see also Remark~\ref{remakdense}).  
   \end{rek}

Next, we prove that $CD=\{\mu\in \M(T)\mid D^+_{\mu}(q)=+\infty\}$ is a dense subset of $\mathcal{M}(T)$. Our strategy involves a modified version of the energy function~\eqref{ef}: for each $q>1$, each $\ve>0$, each $n\in\N$ and each $\mu\in\M(T)$, set 
\[I_{\mu}^{n}(q,\ve):=\int \mu(B^{n}(x,\ve))^{q-1}d\mu(x),\]
where $B^{n}(x,\ve)  := \cdots \times M \times \cdots \times M \times B_M(x_{-n},\ve)\times \cdots  \times B_M(x_{n},\ve)\times M\times \cdots \times M\times \cdots$, and $B_M(z,\ve):=\{w\in M\mid d(w,z)<\ve\}$. 

\begin{lema}
\label{inc00}
Let $\ve>0$. Then, there exists an $n_0\in \N$ such that, for each $x\in X$, $B(x,\ve) \subseteq B^{n_0}(x,\ve)$.
\end{lema}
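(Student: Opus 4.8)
The plan is to read $n_0$ off directly from the weights $\tfrac{1}{n^2+1}$ appearing in the metric~\eqref{metric}: past a certain index these weights drop below $\ve$, and a coordinate whose weight is $<\ve$ imposes no restriction at scale $\ve$, so the ball $B(x,\ve)$ effectively constrains only finitely many coordinates --- precisely the ones that a suitable cylinder $B^{n_0}(x,\ve)$ constrains.

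The first step is a coordinatewise estimate: if an index $i\in\Z$ satisfies $\tfrac{1}{i^2+1}\ge\ve$, then $r(x,y)<\ve$ forces $d(x_i,y_i)<\ve$, for all $x,y\in X$. This is immediate from the nonnegativity of the summands in~\eqref{metric}, which gives $r(x,y)\ge\min\{\tfrac{1}{i^2+1},d(x_i,y_i)\}$; were $d(x_i,y_i)\ge\ve$, this lower bound would be $\ge\min\{\tfrac{1}{i^2+1},\ve\}=\ve$, contradicting $r(x,y)<\ve$.

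The second step is the choice of $n_0$, followed by the conclusion. Since $\bigl(\tfrac{1}{k^2+1}\bigr)_{k\ge 0}$ is decreasing, for $\ve$ in the relevant (small) range one can pick $n_0\in\N$ with $\tfrac{1}{n_0^2+1}\ge\ve$; by monotonicity every index $i$ with $|i|\le n_0$ then also satisfies $\tfrac{1}{i^2+1}\ge\ve$, and, crucially, $n_0$ depends on $\ve$ only and not on $x$. Now fix $x\in X$ and $y\in B(x,\ve)$: applying the first step to each index $|i|\le n_0$ gives $d(x_i,y_i)<\ve$, that is, $y_i\in B_M(x_i,\ve)$, for all such $i$, while the coordinates with $|i|>n_0$ are left unconstrained --- which is exactly the defining condition for $y\in B^{n_0}(x,\ve)$. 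Hence $B(x,\ve)\subseteq B^{n_0}(x,\ve)$ for every $x\in X$, as claimed.

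I do not anticipate any real obstacle: the argument amounts to one trivial inequality per coordinate together with the finiteness of the set of ``active'' coordinates. The only point worth a remark is the tacit restriction to small $\ve$ in the second step --- for large $\ve$, say $\ve>\diam M$, every $B_M(z,\ve)$ equals $M$ and the inclusion is trivial for any $n_0$ --- which is harmless, since this lemma is used only in the limit $\ve\downarrow 0$.
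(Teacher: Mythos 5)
Your argument is correct and is essentially the paper's own proof: the paper likewise picks $n_0$ so that $\ve<\frac{1}{n_0^2+1}$ and observes that for each index with $|n|\le n_0$ the minimum in the metric must then be realized by $d(x_n,y_n)$, forcing $d(x_n,y_n)<\ve$ and hence $y\in B^{n_0}(x,\ve)$. Your explicit remark about the tacit restriction to small $\ve$ is apt but harmless, since the paper's own formula $n_0=[(\frac{1}{\ve}-1)^{-1/2}]+1$ likewise presupposes $\ve<1$, the only regime in which the lemma is ever invoked.
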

\begin{proof}
  Let $x\in X$ and let $y\in B(x,\ve)$; then, for each $n\in\Z$, $\min\{\frac{1}{n^2+1}, d(x_n,y_n)\}<\ve$. Set $n_0:=[(\frac{1}{\ve}-1)^{1/2}]-1$. Since $\frac{1}{(n_0+1)^{2}+1}\le\ve<\frac{1}{n_0^{2}+1}$, it follows that, for each  $|n|<n_0$,  ~$\min\{\frac{1}{n^2+1}, d(x_n,y_n)\}= d(x_n,y_n)<\ve$. Therefore, $y\in B^{n_0}(x,\ve)$.
\end{proof}

The following result is a direct consequence of Lemma~\ref{inc00}.

\begin{propo}
\label{propcorrelation}
Let $q>1$. Then, 
\[D_{\mu}^{+}(q)= \limsup_{\ve \to 0} \frac{ \log I_{\mu}(q,\ve)}{(q-1)\log \ve}\geq \widetilde{D}^+_{\mu}(q):= \limsup_{\ve \to 0} \frac{\log  I^{n_0}_{\mu}(q,\ve)}{(q-1)\log \ve},\]
where $n_0=n_0(\ve)$ is given by Lemma \ref{inc00}.
\end{propo}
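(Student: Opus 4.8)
The plan is to obtain the pointwise (in $\ve$) inequality $I_\mu(q,\ve)\le I_\mu^{n_0}(q,\ve)$ directly from Lemma~\ref{inc00}, and then to pass to the $\limsup$ as $\ve\downarrow 0$, taking care of the sign of $\log\ve$. Since $D_\mu^+(q)=\limsup_{\ve\to0}\frac{\log I_\mu(q,\ve)}{(q-1)\log\ve}$ is just Definition~\ref{gfd}, the entire content of the statement is the inequality $D_\mu^+(q)\ge D_{\mu,n_0}^+(q)$.

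First I would fix $\ve\in(0,1)$ and let $n_0=n_0(\ve)\in\N$ be the integer furnished by Lemma~\ref{inc00}, so that $B(x,\ve)\subseteq B^{n_0}(x,\ve)$ for every $x\in X$. Monotonicity of $\mu$ then gives $\mu(B(x,\ve))\le\mu(B^{n_0}(x,\ve))$ for every $x$, and since $q>1$ the map $t\mapsto t^{q-1}$ is nondecreasing on $[0,1]$, so $\mu(B(x,\ve))^{q-1}\le\mu(B^{n_0}(x,\ve))^{q-1}$ pointwise in $x$. Integrating with respect to $\mu$ (and using that $\mu$ is a probability measure, so integration over $\supp(\mu)$ agrees $\mu$-a.e.\ with integration over $X$) yields
\[
I_\mu(q,\ve)=\int_{\supp(\mu)}\mu(B(x,\ve))^{q-1}\,d\mu(x)\le\int\mu(B^{n_0}(x,\ve))^{q-1}\,d\mu(x)=I_\mu^{n_0}(q,\ve)\le 1.
\]
Both quantities are strictly positive: for $x\in\supp(\mu)$ one has $\mu(B(x,\ve))>0$ by definition of the support, and $\mu(B^{n_0}(x,\ve))\ge\mu(B(x,\ve))>0$; hence the logarithms are well defined and $\log I_\mu(q,\ve)\le\log I_\mu^{n_0}(q,\ve)$.

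Finally I would divide both sides by $(q-1)\log\ve$. Since $q-1>0$ and $\ve\in(0,1)$ forces $\log\ve<0$, the divisor is negative and the inequality reverses, giving
\[
\frac{\log I_\mu(q,\ve)}{(q-1)\log\ve}\ge\frac{\log I_\mu^{n_0}(q,\ve)}{(q-1)\log\ve}
\]
for every $\ve\in(0,1)$ (with $n_0=n_0(\ve)$ on the right). Since $\limsup$ is monotone with respect to pointwise domination, taking $\limsup_{\ve\to0}$ of both sides yields $D_\mu^+(q)\ge D_{\mu,n_0}^+(q)$, as claimed. The argument is essentially a monotonicity computation in which Lemma~\ref{inc00} does all the geometric work of replacing the metric ball $B(x,\ve)$ by the cylinder-type set $B^{n_0}(x,\ve)$; the only step where care is genuinely needed — and the natural place for a sign slip — is the reversal of the inequality upon dividing by $(q-1)\log\ve<0$.
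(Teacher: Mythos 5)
Your proof is correct and is precisely the argument the paper intends: the paper states Proposition~\ref{propcorrelation} without proof, treating it as an immediate consequence of Lemma~\ref{inc00} via the chain $B(x,\ve)\subseteq B^{n_0}(x,\ve)$, monotonicity of $t\mapsto t^{q-1}$ for $q>1$, integration, and the sign reversal upon dividing by $(q-1)\log\ve<0$. You carry out exactly these steps, with the added (correct) care about positivity of the energy integrals and the irrelevance of restricting the integral to $\supp(\mu)$.
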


In what follows, $X$ is a compact and perfect metric space. The fact that $X$ has no isolated points is required for the following result, which is a generalization of Lemma 6 in~\cite{Sigmund1971} (see also~\cite{AS}).

\begin{lema}
  \label{L6S2}
  Let $\mu\in \M(T)$ and let $U$ be an open basic (weak) neighborhood of $\mu$. Then, there exists $s_0\in\mathbb{N}$ such that for each $s\ge s_0$, $\mu_x\in U\cap\M(T)$, where $x=(x_i)\in X$ is a $T$-periodic point with period $s\geq s_0$ and $x_i\neq x_j$ if $i\neq j$, $i,j=1,\ldots,s$.
\end{lema}
\begin{proof}

      We present the proof in details for the reader's sake. 
      For each $k\in\mathbb{N}$, let $\pi_k$ denote the  projection of $X$ onto $\prod_{-k}^{k} M$ and let 
\[{C}_{k}(X)=\left\{f\mid  f \in C(X)~\mbox{ and if }~ \pi_{k}(x)=\pi_{k}(y), ~\mbox{ then }~ f(x)-f(y)=0\right\}.\]
In other words, ${C}_{k}(X)$ is the set of functions $f \in C(X)$ which  depend only on the coordinates $(x_{-k}, \ldots, x_k) \cdot$. The functions that belong to ${C}_{0}=\bigcup_{k\ge 1} {C}_{k}(X)$ are called finite-dimensional.

Consider an arbitrary basic open neighborhood of $\mu$ in $\M(T)$, that is, a set of the form
\[U=\left\{\nu \in \M(T)\mid f \in F \rightarrow \left|\int f d\mu-\int f d\nu\right|<\ve\right\},
\]
where $\ve>0$ and $F$ is a finite subset of ${C}(X)$. Since ${C}_{0}$ is dense in ${C}(X)$, one may (and shall) assume that $F \subset {C}_{k}(X)$, for some $k$. 

Let $\left\{Q_{1}, \dots, Q_{N}\right\}$ be a partition of $Q$  into Borel sets of positive $\mu$-measure on each of which the oscillation of $f^{*}(x):=\lim_{n\to \infty}\frac{1}{n}\sum_{i=1}^{n}f(T^ix)$, for each $f \in F$, is
less than $\ve / 2 $ (here, $Q$ is the set of quasi-regular points, that is, those points for which $f^{*}(x)$ is defined for every $f\in {C}(X)$). Choose points $x^{j}=\left\{x^{j}_i\right\} \in Q_{j}$ $(j=1, \ldots, N)$.  Then, for each $f\in F$,
\[
\left|\int_{Q} f^{*} d\mu-\sum_{j=1}^{N} f^{*}\left(x^{j}\right) \mu\left(Q_{j}\right)\right|<\frac{\ve}{2}.
\]
Now, by a theorem of Kryloff and Bogoliouboff (see \cite{Oxtoby1952}, p. 118), $ \mu(Q)=1$, and by the Ergodic Theorem, it follows that $\int_{Q} f^{*} d \mu=\int_{Q}fd\mu $.

Set, for each~$n\in\mathbb{N}$, $f_n(x)=\frac{1}{n}\sum_{i=1}^{n}f(T^ix)$. Hence, there exists $n_0\in\mathbb{N}$ such that, for each $n\ge n_0$ and each $f\in F$,
\[
\left|\int f d\mu-\sum_{j=1}^{N} f_{n}\left(x^{j}\right) \mu\left(Q_{j}\right)\right|<\frac{\ve}{2}
\]
and
\[
\frac{(2 k+1)(2 L)}{n} < \frac{\ve}{2},
\]
where $L=\max \{\|f\|\mid f \in F\}$. Fix $n\ge n_0$ and note that there exists $m_0\in\mathbb{N}$ such that, for each $m\ge m_0$ and each $f\in F$, one can approximate the numbers $\mu\left(Q_{j}\right)$ by positive rational numbers $m_{j} / m$ such that
\begin{eqnarray}
\label{eqOxtobyLema01}
\left|\int f d\mu-\sum_{j=1}^{N} f_{n}\left(x^{j}\right) \frac{m_{j}}{m}\right|<\frac{\ve}{2}
\end{eqnarray}
and
\[\sum_{j=1}^{N} m_{j}=m.
\]

For each $j=1,\ldots,N$, denote by $B_{j}$ the $n$-block
\[
B_{j}=\left[x^{j}_1, \ldots, x^{j}_n\right], 
\]
and form the $mn$-block
\[B=\underset{m_1}{\underbrace{B_1\dots B_1}} ~ \underset{m_2}{\underbrace{B_2\dots B_2}} ~ \dots ~ \underset{m_N}{\underbrace{B_N\dots B_N}}.\]

Let $x$ be the point of $X$ with $T$-period $m n$ such that
\[
[x_1, \dots, x_{mn}]=B.
\]
Thus, for each $f \in C_{k}(X)$, and therefore, for each $f \in F$, one has
\[
f\left(T^{i} x\right)=f\left(T^{i} x^{1}\right) \quad \text { for } \quad k+1 \leq i \leq  n-k.
\]
By a simple procedure, one gets
\begin{equation}
\label{eqOxtobyLema02}
\left|f_{mn}(x)-\sum_{j=1}^{N} f_{n}\left(x^{j}\right) \frac{m_{j}}{m}\right| \leq \frac{(2 k+1)}{n}(2 L)<\frac{\ve}{2}.
\end{equation}

Now, since $x$ has $T$-period $mn$, it follows that
\begin{equation}
\label{eqOxtobyLema03}
\int f d\mu_x=f_{mn}(x).
\end{equation}

Finally, by combining \eqref{eqOxtobyLema01}, \eqref{eqOxtobyLema02} and \eqref{eqOxtobyLema03}, it follows that, for each $f\in F$,
\[
\left|\int f d\mu_x-\int f d\mu\right|<\ve.
\]

Therefore, $\mu_x \in U\cap \M(T)$, where $x$ is a $T$-periodic point of period $s=mn\ge m_0n_0=:s_0$. In order to complete the proof, just note that since each point of $M$ is a limit point and since each $f\in F$ is continuous, one can choose $x$ such that $x_i\neq x_j$ if $i\neq j$, $i,j=1,\ldots,s$, keeping the estimates as before.  
%
%
\end{proof}

\begin{propo}
\label{central}
Let $\mu\in \mathcal{M}(T)$ and let $q>1$. Then, each (weak) neighborhood,  $V$, of  $\mu$ contains $\rho\in\M(T)$ such that $D_{\rho}^+(q)=+\infty$.
\end{propo}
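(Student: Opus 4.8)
The plan is to show that every weak neighborhood $V$ of a given $\mu\in\M(T)$ contains some $\rho\in\M(T)$ with $D^+_\rho(q)=+\infty$ by exploiting the density of nicely-spread periodic measures (Lemma~\ref{L6S2}) together with the cylinder-set lower bound for the energy function encoded by $I^{n_0}_\mu(q,\ve)$ and Proposition~\ref{propcorrelation}. Concretely, I would fix a basic weak neighborhood $V$ of $\mu$ determined by finitely many continuous test functions and an error $\delta>0$, and then, using Lemma~\ref{L6S2}, pick a $T$-periodic measure $\mu_x$ lying in $V$ associated with a periodic point $x$ whose period $s$ is as large as we like and whose coordinates $x_1,\dots,x_s$ over one period are pairwise distinct. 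Since $V$ is open, a neighborhood of $\mu_x$ is still contained in $V$, so it suffices to produce $\rho\in\M(T)$, arbitrarily close to $\mu_x$, with $D^+_\rho(q)=+\infty$.

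The key mechanism is that $\mu_x$ is atomic, supported on the $s$ points $T^0x,\dots,T^{s-1}x$, each of mass $1/s$; for such a measure, if $\ve$ is small enough that the balls $B^{n_0}(T^i x,\ve)$ (with $n_0=n_0(\ve)$ from Lemma~\ref{inc00}) separate the orbit, then $I^{n_0}_{\mu_x}(q,\ve)=s\cdot(1/s)^{q}=s^{1-q}$, which is bounded below independently of $\ve$, forcing $D^+_{\mu_x}(q)\ge 0$ but giving nothing like $+\infty$ directly. So the idea instead is to build $\rho$ not as a single periodic measure but as a carefully chosen limit (or a convex combination along a sequence) of periodic measures with periods $s_k\to\infty$ and with the orbit points placed so that, along a sequence $\ve_k\downarrow 0$, the cylinder ball $B^{n_0(\ve_k)}(z,\ve_k)$ around a $\rho$-typical point $z$ captures only a tiny $\rho$-fraction — of order $s_k^{-1}$ or smaller — while $(q-1)\log\ve_k$ stays comparatively small in absolute value because $n_0(\ve_k)$ grows only polynomially in $1/\ve_k$ (this is exactly where the sub-exponential metric enters: $\ve_k$ need not be exponentially small in $n_0(\ve_k)$). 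Then $\log I^{n_0}_{\rho}(q,\ve_k)\approx (1-q)\log s_k$ while $|\log\ve_k|$ grows much more slowly than $\log s_k$, so the ratio $\frac{\log I^{n_0}_\rho(q,\ve_k)}{(q-1)\log\ve_k}\to+\infty$, and Proposition~\ref{propcorrelation} upgrades this to $D^+_\rho(q)=+\infty$.

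In more detail, I would carry out the following steps: (i) reduce to approximating a periodic measure $\mu_x$ in $V$, via Lemma~\ref{L6S2}; (ii) fix finitely many continuous functions $f_1,\dots,f_m$ and $\delta>0$ defining a sub-neighborhood of $\mu_x$ inside $V$, and note that these $f_j$ are uniformly continuous on the compact space $X$, so perturbations of the orbit of $x$ in finitely many coordinates, bounded in the metric $r$, keep the measure inside $V$; (iii) construct, for a rapidly increasing sequence of periods, periodic points whose single-period coordinates are distinct \emph{and} chosen (using that $M$ is perfect, hence has points arbitrarily close together) so that for a suitable scale $\ve_k$ the number of orbit points falling in any cylinder $B^{n_0(\ve_k)}(\cdot,\ve_k)$ is small relative to the period — quantitatively, $\mu_{\rho_k}(B^{n_0}(z,\ve_k))\le C/s_k$ for $\rho_k$-a.e.\ $z$; (iv) estimate $I^{n_0}_{\rho_k}(q,\ve_k)\le (C/s_k)^{q-1}$, hence $\frac{\log I^{n_0}_{\rho_k}(q,\ve_k)}{(q-1)\log\ve_k}\ge \frac{(q-1)\log(s_k/C)}{(q-1)|\log\ve_k|}=\frac{\log(s_k/C)}{|\log\ve_k|}$, and arrange $s_k$ and $\ve_k$ so this $\to\infty$; (v) either pass to a limit point $\rho$ of the $\rho_k$ in the compact space $\M(T)$ (checking the $\limsup$ survives in the limit, using the continuity of $\mu\mapsto J_\mu(q,\ve)$ from Lemma~\ref{asympt1} and a diagonal argument) or, more robustly, directly build a single $\rho$ close to $\mu_x$ that already exhibits infinitely many good scales $\ve_k$. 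The main obstacle I anticipate is step (iii)–(v): making the cylinder-mass estimate $\mu_{\rho_k}(B^{n_0}(z,\ve_k))\le C/s_k$ hold \emph{simultaneously} with $\rho_k$ remaining in the fixed neighborhood $V$ and with the scales $\ve_k$ tending to $0$ — one must interleave three competing requirements (staying near $\mu_x$ in the weak topology, which constrains only finitely many coordinates; spreading the orbit enough in the relevant coordinates so cylinders are sparse; and controlling $n_0(\ve_k)$ so $|\log\ve_k|=o(\log s_k)$), and the cleanest route is probably to fix first how many coordinates the test functions $f_j$ effectively see, perturb only coordinates far beyond that window, and let the period grow fast enough that the combinatorics of orbit-overlaps in those far coordinates can be driven down at will.
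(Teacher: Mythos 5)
Your high-level frame is right — reduce to a periodic measure in $V$ via Lemma~\ref{L6S2}, work with the cylinder energy $I^{n_0}_\mu(q,\ve)$ and Proposition~\ref{propcorrelation}, and exploit that the sub-exponential metric makes $n_0(\ve)$ grow polynomially in $1/\ve$ rather than logarithmically — and you correctly diagnose that a single periodic measure gives only $I^{n_0}_{\mu_x}(q,\ve)=s^{1-q}$, hence nothing. But the object you actually need, the measure $\rho$, is never constructed, and the mechanism you sketch for it has a quantitative flaw. Your target bound $\rho_k\bigl(B^{n_0}(z,\ve_k)\bigr)\le C/s_k$ yields only $\frac{\log(s_k/C)}{|\log\ve_k|}$, so you need $\log s_k\gg|\log\ve_k|$. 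For a periodic orbit with distinct coordinates, separation of the $s_k$ atoms at scale $\ve_k$ through a \emph{single} coordinate forces $s_k$ to be at most the $\ve_k$-packing number of $M$, which for a general perfect compact $M$ (say $M=[0,1]$) is polynomial in $1/\ve_k$ — the ratio then stays bounded and you get a finite dimension, not $+\infty$. To win, the cylinder masses must decay \emph{exponentially in the window length} $2n_0(\ve)+1$, i.e.\ you must exploit separation spread across all $2n_0(\ve)+1$ coordinates simultaneously; your step (iii) gestures at this ("orbit points placed so that cylinders are sparse") but does not produce such an orbit, and this is where the entire difficulty of the proposition lives. A secondary gap: your limit step (v) is not routine, since each $\rho_k$ is good only at its own scale $\ve_k$ (and reverts to $s_k^{1-q}$ at smaller scales), the functional $\mu\mapsto\mu(B^{n_0}(x,\ve))$ is only lower semicontinuous, and Lemma~\ref{asympt1} controls $J_\mu(q,\ve)$ at a \emph{fixed} $\ve$, which does not by itself transfer the estimates at the scales $\ve_j$ to the limit measure. (Also, for a periodic orbit measure you cannot "perturb only coordinates far beyond the window seen by the $f_j$": by shift-invariance, $\int f_j\,d\mu_x=\frac1s\sum_l f_j(T^lx)$ sees every coordinate of $x$.)

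The paper resolves exactly this point with Sigmund's Markov chain construction (Lemma 7 of~\cite{Sigmund1971}): keep the $s$ states $w_1,\dots,w_s$ of the periodic orbit but replace the deterministic transition $w_i\mapsto w_{i+1}$ by a stochastic one that follows the orbit with probability $1-\kappa$ and jumps to a uniformly random other state with probability $\kappa$. The resulting shift-invariant measure $\rho$ is still in $V$ for $\kappa$ small, is a \emph{single} measure (no limit of measures is needed), and satisfies
\[
I^{n}_{\rho}(q,\ve)=\sum_{C^n}\rho(C^n)^q=s^{1-q}\left((s-1)^{1-q}\kappa^q+(1-\kappa)^q\right)^{2n},
\]
which decays exponentially in $n=n_0(\ve)\sim\ve^{-1/2}$ because the base is $<1$ (this is what the lower bound $s\ge s_0$ guarantees). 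Dividing by $(q-1)\log\ve$ then gives a quantity of order $\ve^{-1/2}/|\log\ve|\to+\infty$, valid along \emph{all} scales $\ve\to0$. If you want to salvage your route, the exponential-in-$n$ decay of cylinder masses is the property you must build into your periodic or near-periodic measures by hand; the Markov perturbation is the cleanest known way to do it while staying inside the weak neighborhood.
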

\begin{proof}
Let $\delta>0$ and set 
\[V=V_{\mu}(f_1,\cdots, f_d;\delta) =\left\{ \sigma \in \mathcal{M}\mid \left| \int f_jd\mu - \int f_jd\sigma  \right|<\delta,\, j=1,\ldots,d \right\},\]
where each $f_j\in C(M^{\Z})$ (this is the set of continuous real valued functions on $M^{\Z}$, endowed with the supremum norm). One can further assume that there exists an $N$ such that, for each $j=1,\ldots,d$, one has $f_j(x)=f_j(y)$ if, for each $|i|\leq N$, $x_i=y_i$. Note that, since $M$ is compact, functions of this type form a dense set in $C(M^{\Z})$.

Let $L=\sup\{|f_j(x)|\mid x\in M^{\Z},\, j=1,\ldots,d\}$, let $\kappa>0$ be such that
\begin{equation}
\label{kappaineq}
\begin{split}
\kappa< (8L)^{-1}\,2^{-(2N+1)}\,\delta,\\
1-(1-\kappa)^{2N}<(8L)^{-1}\delta,
\end{split}
\end{equation}
and set $S=1+\left(\frac{\kappa^q}{1-(1-\kappa)^q}\right)^{1/(q-1)}$. It follows from Lemma~\ref{L6S2} that there exists a $T$-periodic point $w=(w_i)\in M^{\Z}$, with period $s\geq \max\{s_0,S\}$, such that for each $i\neq j$, $i,j=1,\ldots,s$, $w_i\neq w_j$ and $\mu_w\in V_{\mu}(f_1,\cdots, f_d; \delta/2)$.  

Following the proof of Lemma 7 in~\cite{Sigmund1971},  one defines, for each fixed $s\ge s_0$, a Markov chain $\rho$ whose states are $w_1,\cdots, w_s$, whose initial probabilities are given by the $s$-tuple  $(1/s,\cdots,1/s)$, and whose  transition probabilities are given by the $s\times s$-matrix $p_{ij}$, where
\begin{eqnarray*}
p_{s\,1}&=&1-\kappa,\\
p_{i\,i+1}&= & 1-\kappa ~\,\mbox{ for } i=1,\ldots, s-1,\\
p_{i\,j}&=& \frac{\kappa}{s-1} ~\mbox{ otherwise.}
\end{eqnarray*}

One can show (see the proof of Lemma 7 in~\cite{Sigmund1971}) that $\rho\in V_{\mu_w}(f_1,\cdots, f_d; \delta/2)$, from which follows that $\rho\in V_{\mu}(f_1,\cdots, f_d; \delta)$.

Now, by Proposition \ref{propcorrelation}, one just needs to prove that $\widetilde{D}^+_{\rho}(q)=\infty$.  Let $\ve\in(0,\min\{1,\ve_0\})$, with $\ve_0:=\min\{|w_i-w_l|: \,i,l=1,\ldots,s\}$, and set $n=n_0(\ve)$. 

Set $C^n=[-n; a_{i_{-n}},\ldots,a_{i_{n}}]=\{ (y_i)_{i\in \Z}\in X\mid y_{-n}=a_{i_{-n}},\ldots,y_n=a_{i_{n}}\}$, with $a_{i_{-n}},\ldots,a_{i_{n}}\in \{w_1,\ldots,w_s\}$. For each $x\in C^n$, it is clear from the choice of $\ve$ that $C^n\subset B^n(x,\ve)=\{ (y_i)_{i\in \Z}\in X\mid y_i\in B(x_i,\ve), ~i=-n,\ldots,n\}$ and that $\rho(B^n(x,\ve))=\rho(C^n)$. 

Note that, as in Lemma 7 in \cite{Sigmund1971}, there are $s^{2n+1}$ sets of the form $C^n=[-n; a_{i_{-n}},\dots, a_{i_n}]$ (which we will refer as the ($n$-th level) cylinders) that can be split into two groups, say $P$ and $Q$. $P$ consists of those $s$ sets  which contain an element of the orbit of $w$. The second group, $Q$, splits into the groups $Q_1,\cdots, Q_{2n}$, where $Q_p$ is the group of those $s \,\binom {2n} {p}\, (s-1)^p$ ($n$-th level) cylinders for which there are exactly $p$ places $i=-n,\ldots,n$ where $a_{i+1}$  is not the {\em natural follower} of $a_i$, in the sense that if $a_i=w_l$ and $a_{i+1}=w_m$, then $m\neq l+1(\modd s)$. For each $j=1,\ldots,s^{2n+1}$, denote by $C_j^n$ these ($n$-th level) cylinders.

Thus, since $I_\rho^n(q,\ve)$ depends only on the values taken by $\rho(B^{n}(x,\ve))$ when $x$ ranges over the $s^{2n+1}$ ($n$-th level) cylinders described above, one has 
\begin{eqnarray}
\label{e7}
\int \rho(B^{n}(x,\ve))^{q-1}d\rho(x)\nonumber
&=& \sum_{j=1}^{s^{2n+1}} \int_{C^n_j} \rho(B^{n}(x,\ve))^{q-1}d\rho(x) ~+   \int_{X\setminus \overset{s^{2n+1}}{\underset{j=1}{\bigcup}}C^n_j} \rho(B^{n}(x,\ve))^{q-1}d\rho(x)\nonumber\\
&=& \sum_{j=1}^{s^{2n+1}} \int_{C^n_j} \rho(C^n_j)^{q-1}d\rho(x) = 
\sum_{j=1}^{s^{2n+1}} \rho(C^n_j)^{q-1}\rho(C^n_j)\nonumber\\
&=&\sum_{j=1}^{s^{2n+1}} \rho(C^n_j)^{q}
=  \sum_{C^n\in P}\rho(C^n)^{q} + \sum_{p=1}^{2n}\sum_{C^n\in Q_p}\rho(C^n)^{q}\nonumber\\
&=&  s\left(\frac{1}{s}(1-\kappa)^{2n}\right)^q +\sum_{p=1}^{2n}\sum_{C^n\in Q_p}\frac{1}{s^q}p_{a_{-n},a_{-n+1}}^q\cdots p_{a_{n-1},a_{n}}^q,
\end{eqnarray}
where we have used, in the second inequality, that for each $x\in C^n$ and each $0<\ve<\ve_0$, $\rho(B^n(x,\ve))=\rho(C^n)$, as previously discussed.

Now,
\begin{eqnarray*}
\sum_{C^n\in Q_{p}}\frac{1}{s^q}p_{a_{-n},a_{-n+1}}^q\cdots p_{a_{n-1},a_{n}}^q &=& s \binom {2n} {p}(s-1)^p \cdot \frac{1}{s^q}\left( \frac{\kappa}{s-1} \right)^{pq}(1-\kappa)^{(2n-p)q},
\end{eqnarray*}
and therefore,
\begin{eqnarray}
\label{e8}
\sum_{p=1}^{2n}\sum_{C^n\in Q_p}\frac{1}{s^q}p_{a_{-n},a_{-n+1}}^q\cdots p_{a_{n-1},a_{n}}^q&=& s^{1-q}\sum_{p=1}^{2n} \binom {2n} {p}(s-1)^p\left( \frac{\kappa^q}{(s-1)^q} \right)^{p}((1-\kappa)^q)^{(2n-p)}\nonumber\\ 
&=& s^{1-q}\left( \left((s-1)^{1-q}\kappa^q+(1-\kappa)^q \right)^{2n}- (1-\kappa)^{2nq}\right).
\end{eqnarray}
Thus, combining \eqref{e7} with (\ref{e8}), one gets 
\begin{eqnarray*}
\int \rho(B^{n}(x,\ve))^{q-1}d\rho(x)\nonumber
&= & s^{1-q}\left[ (1-\kappa)^{2nq} +  \left((s-1)^{1-q}\kappa^q+(1-\kappa)^q \right)^{2n}- (1-\kappa)^{2nq} \right]\\
&=&    s^{1-q} \left((s-1)^{1-q}\kappa^q+(1-\kappa)^q \right)^{2n}.
\end{eqnarray*}
Recall that, by Lemma \ref{inc00}, one has $n\ge (\frac{1}{\ve}-1)^{1/2}-1$. Note also that, by the definition of $S$, $\log\left( (s-1)^{1-q}\kappa^q+(1-\kappa)^q  \right)<0$. Thus,
\begin{eqnarray*}
\log\int \rho(B^{n}(x,\ve))^{q-1}d\rho(x)
&=&\log\left(s^{1-q} \left((s-1)^{1-q}\kappa^q+(1-\kappa)^q \right)^{2n}\right)\\
&=& (1-q)\log s +2n \log\left( (s-1)^{1-q}\kappa^q+(1-\kappa)^q  \right)\\
&\leq & (1-q)\log s +(2(1/\ve-1)^{1/2}-2)\log\left( (s-1)^{1-q}\kappa^q+(1-\kappa)^q  \right),
\end{eqnarray*}
from which follows that 
\begin{eqnarray}\label{desimp}
\nonumber \frac{\log\int \rho(B^{n}(x,\ve))^{q-1}d\rho(x)}{(q-1)\log \ve}
&\geq &  \frac{(1-q)\log s}{(q-1)\log \ve} - \frac{2\log\left( (s-1)^{1-q}\kappa^q+(1-\kappa)^q  \right)}{(q-1)\log\ve} +\\
&& \frac{2\log\left( (s-1)^{1-q}\kappa^q+(1-\kappa)^q  \right)}{(q-1)}\frac{(1/\ve-1)^{1/2} }{\log \ve}.
\end{eqnarray}
Letting $\ve\to 0$, one gets $\widetilde{D}^+_{\rho}(q)=+\infty$.
\end{proof}

\begin{rek}
\label{remakdense}
It is clear from inequality~\eqref{desimp} that the metric $r$ for which the previous result is valid must necessarily be sub-exponential, since in this case, $\lim_{\ve\to 0}\frac{h(1/\ve)}{|\log\ve|}=+\infty$, where $h$ is the inverse of the (invertible) function $f:[0,\infty)\rightarrow(0,\infty)$, defined in such a way that, for each $n\in\N\cup\{0\}$, $f(n):=a_n$ (see the discussion immediately after~\eqref{metric}).

  Moreover, if one considers the exponential metric $r(x,y)=\sum_{|k|\ge 0}\min\{2^{-|k|},d(x_k,y_k)\}$, or even $r(x,y)=\sum_{|k|\ge 0}2^{-|k|}\frac{d(x_k,y_k)}{1+d(x_k,y_k)}$ (naturally, one can replace $a_n=2^{-|n|}$ by $a_n=c^{-\alpha|n|}$, with $c>1$ and $\alpha>0$), then for each $q>1$, 
  \begin{equation}\label{ruim}
    D_\rho^+(q)\le \frac{2|\log((s-1)^{1-q}\kappa^q+(1-\kappa)^q)|}{(q-1)\log2},
 \end{equation} with $\rho$, $\kappa$ and $s$ defined as in the proof of Proposition~\ref{central}.

  Namely, if $n_0\in\mathbb{N}$ is such that $2^{-n_0}<\ve\le 2^{-n_0+1}$, then it is easy to see that for each $n\ge n_0$ and each $x\in X$, $C^{n}(x)\subset B(x;\ve)$; thus, as in equation~\eqref{e7},
\begin{eqnarray*}
\int \rho(B(x,\ve))^{q-1}d\rho(x)\nonumber
&=& \sum_{j=1}^{s^{2n+1}} \int_{C^n_j} \rho(B(x,\ve))^{q-1}d\rho(x) ~+   \int_{X\setminus \overset{s^{2n+1}}{\underset{j=1}{\bigcup}}C^n_j} \rho(B(x,\ve))^{q-1}d\rho(x)\nonumber\\
&\ge& \sum_{j=1}^{s^{2n+1}} \int_{C^{n}_j} \rho(C^{n}_j)^{q-1}d\rho(x) = s^{1-q} \left((s-1)^{1-q}\kappa^q+(1-\kappa)^q \right)^{2n},
\end{eqnarray*}
from which follows that (for $\ve\in(0,\min\{1,\ve_0\})$ and $n=n_0$)
\begin{eqnarray*}
\nonumber \frac{\log\int \rho(B(x,\ve))^{q-1}d\rho(x)}{(q-1)\log \ve}
&\leq &  \frac{\log s}{|\log \ve|} + \frac{2(|\log\ve|+\log 2)|\log\left( (s-1)^{1-q}\kappa^q+(1-\kappa)^q  \right)|}{(q-1)(\log 2)|\log\ve|} 
\end{eqnarray*}
Letting $\ve\to 0$, one gets~\eqref{ruim}. In particular, given $\eta>0$, there exists a dense set of the Markov shifts $\rho$ such that $D_\rho^+(q)<\eta$; namely, just choose $\kappa$ small enough and $s$ large enough so that $|\log((s-1)^{1-q}\kappa^q+(1-\kappa)^q)|<(\eta(q-1)\log 2)/2$, and we are done.
%
\end{rek}


\section{Proof of the Theorems~\ref{teocentral0} and~\ref{teocentral1}}
\label{T1.21.3}

\begin{proof1}
  Since, by Proposition~\ref{zerolowerfractal}, 
$$D_{-}^{*}=\{\mu\in \M(T)\mid\ d^-_{q}(\mu)=0\}\subset FD=\{\mu\in \M(T)\mid\ D^-_{q}(\mu)=0\},$$
the result follows from Propositions \ref{Gdeltazero} and \ref{dense1}. 
\end{proof1}

\

\noindent\begin{proof2}
  The result is a direct consequence of Propositions~\ref{Gdelta},~\ref{propcorrelation} and~\ref{central}. 
\end{proof2}

\begin{rek} It follows from Remark~\ref{remakdense} that if Theorem~\ref{teocentral1} is true for the product space $X$ endowed with an exponential metric, then the proof will follow from a different argument than the one presented in the proof of Proposition~\ref{central}.
\end{rek}

\section*{Acknowledgments}
The first author was partially supported by FAPEMIG (a Brazilian government agency; Universal Project 001/17/CEX-APQ-00352-17). The  second author was partially
supported  by CIENCIACTIVA C.G. 176-2015.

\bibliographystyle{acm}
\bibliography{refs2}

\end{document}